\numberwithin{equation}{section}
\definecolor{darkred}{rgb}{1,0,0} %can change the intensity in [0,1]
\definecolor{darkgreen}{rgb}{0,0.8,0}
\definecolor{darkblue}{rgb}{0,0,1}
\DeclareFontFamily{OMX}{MnSymbolE}{}
\DeclareSymbolFont{MnLargeSymbols}{OMX}{MnSymbolE}{m}{n}
\DeclareFontShape{OMX}{MnSymbolE}{m}{n}{
    <-6>  MnSymbolE5
   <6-7>  MnSymbolE6
   <7-8>  MnSymbolE7
   <8-9>  MnSymbolE8
   <9-10> MnSymbolE9
  <10-12> MnSymbolE10
  <12->   MnSymbolE12
}{}
\DeclareFontShape{OMX}{MnSymbolE}{b}{n}{
    <-6>  MnSymbolE-Bold5
   <6-7>  MnSymbolE-Bold6
   <7-8>  MnSymbolE-Bold7
   <8-9>  MnSymbolE-Bold8
   <9-10> MnSymbolE-Bold9
  <10-12> MnSymbolE-Bold10
  <12->   MnSymbolE-Bold12
}{}
\let\llangle\@undefined
\let\rrangle\@undefined
\DeclareMathDelimiter{\llangle}{\mathopen}%
                     {MnLargeSymbols}{'164}{MnLargeSymbols}{'164}
\DeclareMathDelimiter{\rrangle}{\mathclose}%
                     {MnLargeSymbols}{'171}{MnLargeSymbols}{'171}
\newcommand{\N}{\mathbb{N}}                     % the natural numbers
\newcommand{\Z}{\mathbb{Z}}                     % the integer numbers
\newcommand{\R}{\mathbb{R}}                     % the real line
\newcommand{\C}{\mathbb{C}}                     % the complex plane
\newcommand{\E}{\mathbb{E}}                    % energy
\newcommand{\id}{\mathrm{id}}       %id
\newcommand{\crit}{\mathrm{Crit\,}}               % Critical set
\newcommand{\Cont}{\mathrm{Cont}}     %Cont
\theoremstyle{plain}
\newtheorem{thm}{Theorem}[section]
\newtheorem{lem}[thm]{Lemma}
\newtheorem{cor}[thm]{Corollary}
\theoremstyle{definition}
\newtheorem{defn}[thm]{Definition}   
\newtheorem{rem}[thm]{Remark}
\title{Positive loops and $L^{\infty}$-contact systolic inequalities}
\author{Peter Albers}
\author{Urs Fuchs}
\author{Will J. Merry}
\address{ Peter Albers\\
 Mathematisches Institut\\
 Ruprecht-Karls-Universit\"at Heidelberg}
\email{palbers@{}mathi.uni-heidelberg.de}
\address{ Urs Fuchs\\
 Mathematisches Institut\\
 Ruprecht-Karls-Universit\"at Heidelberg}
\email{ufuchs@mathi.uni-heidelberg.de}
\address{Will J.~Merry\\
 Department of Mathematics\\
 ETH Z\"urich}
\email{merry@math.ethz.ch}
\begin{document}

\begin{abstract}
We prove an  inequality between the $L^{\infty}$-norm of the contact Hamiltonian of a positive loop of contactomorphims and the minimal Reeb period. This implies that there are no small positive loops on hypertight or Liouville fillable contact manifolds. Non-existence of small positive loops for overtwisted 3-manifolds was proved by Casals-Presas-Sandon in \cite{CasalsPresasSandon2016}.

As corollaries of the inequality we deduce various results. E.g.~we prove that certain periodic Reeb flows are the \emph{unique} minimizers of the $L^{\infty}$-norm. Moreover, we establish $L^\infty$-type contact systolic inequalities in the presence of a positive loop.
\end{abstract}

\maketitle{}

\section{Introduction}

In 2000 Eliashberg and Polterovich \cite{EliashbergPolterovich2000} introduced the notion of positivity in contact geometry and dynamics. Jointly with Kim in 2006 \cite{EliashbergKimPolterovich2006} they related positivity with contact (non-)squeezing and rigidity in contact dynamics. The central idea is that of a positive path of contactomorphisms, which we now explain.

A contact manifold $(\Sigma, \xi)$ consists of an $(2n-1)$-dimensional manifold $\Sigma$ equipped with a maximally non-integrable hyperplane distribution $\xi \subset T \Sigma$. Throughout this article we always assume $\xi$ is coorientable, which means that there exists a one-form $\alpha \in \Omega^1(\Sigma)$ such that $\xi = \ker \alpha$. The maximal non-integrability condition can then be conveniently expressed as the assertion that $\alpha \wedge d \alpha^{n-1}$ is nowhere vanishing. Such an one-form $\alpha$ is called a contact form. It determines a vector field $R_{\alpha}$ called the Reeb vector field by requiring that $i_{R_{\alpha}} d \alpha = 0$ and $\alpha(R_{\alpha}) = 1$. We fix a coorientation of $\xi$ and consider only contact forms such that $R_{\alpha}$ is positive with respect to this coorientation; we denote the set of such $\alpha$ by $\mathcal{C}(\Sigma, \xi)$.

The identity component $\Cont_0(\Sigma, \xi)$ of the group of contactomorphisms consists of those diffeomorphisms $\psi : \Sigma \to \Sigma$ which satisfy  $\psi_* \xi = \xi$ and are isotopic to the identity through such diffeomorphisms. The condition $\psi_* \xi = \xi$ can be rephrased as saying that $\psi^* \alpha = \rho \alpha$ for a positive function $\rho : \Sigma \to (0,\infty)$ which is called the conformal factor of $\psi$. In this article a path $\varphi = \{ \varphi_t\}_{t \in [0,1]}$ of contactomorphisms always starts at the identity: $\varphi_0 = \id$. A path of contactomorphisms $\varphi$ is generated by a time-dependent vector field $X_t$, i.e.~$X_t \circ \varphi_t = \tfrac{d}{dt} \varphi_t$. 
The function $h^{\alpha} : \Sigma \times [0,1] \to \R$ defined by
\begin{equation}
  h^{\alpha}_t \big(\varphi_t(x) \big) := \alpha_x\big(X_t(x)\big) 
\end{equation}
is called the contact Hamiltonian of the path $\varphi$. A path $\varphi$ is called \emph{positive} (resp.~\emph{non-negative}) if $h^\alpha$ is positive (resp. non-negative) and \emph{non-trivial} if $h^\alpha\neq 0$. 
These conditions are independent on the contact form $\alpha$ defining the contact structure; for instance the positivity condition means just that $X_t$ is positively transverse to $\xi$ for each $t\in [0,1]$. In the following, a positive path means a positive path of contactomorphisms. The flow $\theta_t^{\alpha}$ of the Reeb vector field $R_{\alpha}$ has contact Hamiltonian the constant function $1$, and thus is always a positive path. 

As explained in \cite{EliashbergPolterovich2000,EliashbergKimPolterovich2006}, a particularly important role is played by \emph{positive loops} of contactomorphisms $\varphi = \{ \varphi_t \}_{t \in S^1}$. In this article we prove a sharp inequality between the $L^{\infty}$-norm of the contact Hamiltonian of a positive loop of contactomorphims and the minimal Reeb period and deduce  $L^{\infty}$-contact systolic inequalities in the presence of a positive loop. 

From now on we will only consider closed connected contact manifolds. Given a free homotopy class $\nu \in [S^1, \Sigma]$ and $\alpha \in \mathcal{C}(\Sigma, \xi)$, denote by 
\begin{equation}
\label{eq:min_period}
  \wp(\alpha ,\nu) := \inf \Big\{ T>0 \mid 
   \exists \text{ a closed orbit } \gamma \text{ of } R_\alpha \text{ in the class } \nu \text{ with period }T\Big\} .
\end{equation}
This number is either attained and thus positive or infinite,
\begin{equation}
  \wp (\alpha , \nu) >0 ,\qquad \forall \,\nu \in [S^1, \Sigma],
 \end{equation} 
 since $R_{\alpha}$ is a non-vanishing vector field. We denote the component of contractible loops by $\nu_0$. Given a loop of contactomorphisms $\varphi = \{ \varphi_t \}_{t \in S^1}$ of $\Sigma$, we denote by $\nu_{\varphi}$ the free homotopy class containing the loops $t \mapsto \varphi_t(x)$ for some $x \in \Sigma$.
Given a path $\varphi_t$ of contactomorphisms, we denote by $\rho^{\alpha}_t : \Sigma \to (0,\infty)$ the associated family of conformal factors, and we set 
\begin{equation}
\label{eq:pi_alpha_phi}
   \varrho^{\alpha}(\varphi) := \min_{(x,t ) \in \Sigma \times [0,1]} \rho^{\alpha}_t(x)  \in (0,1].
 \end{equation} 
Here the positivity of $\varrho^\alpha(\varphi)$ is a consequence of the compactness of $\Sigma$ and $\varrho^{\alpha} (\varphi)\le 1$ follows from the fact that by assumption $\varphi_0 = \mathrm{id}$ and hence $\rho^{\alpha}_0 \equiv 1$.

\begin{rem}
Throughout this article we make the convention that a Reeb orbit of period $\eta\in\R$ is a curve $\gamma:S^1\to \Sigma$ satisfying the equation
\begin{equation}
\dot\gamma=\eta R_\alpha(\gamma)\;.
\end{equation}
In particular, we allow constant curves in case $\eta=0$.
\end{rem}

\begin{thm}
\label{thm:thm1_intro}
Fix $\alpha \in \mathcal{C}(\Sigma,\xi)$ and let $\varphi$ be a loop of contactomorphisms with contact Hamiltonian $h^{\alpha}$. If 
\begin{equation}
0<\| h^{\alpha} \|_{\mathrm{osc}} < \varrho^{\alpha}(\varphi) \cdot \wp(\alpha,  \nu_0) 
\end{equation}
then there is a closed Reeb orbit belonging to the free homotopy class $\nu_{\varphi}$ with period $\eta$ such that 
\begin{equation}
\|h^{\alpha}\|_-< \eta \leq \|h^{\alpha}\|_+.
\end{equation}
As a consequence, if $\varphi$ is a non-trivial non-negative loop with $\| h^\alpha \|_{\mathrm{osc}} <\varrho^\alpha(\varphi) \cdot \wp(\alpha,  \nu_0) $, then
\begin{equation}
  \wp(  \alpha, \nu_{ \varphi}) \le \| h^{\alpha} \|_+.
\end{equation}
\end{thm}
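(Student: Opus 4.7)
The natural framework is the symplectization $(W,d\lambda)=(\R\times\Sigma,\,d(e^r\alpha))$. The first step is to lift the positive loop $\{\varphi_t\}_{t\in S^1}$ to a Hamiltonian loop
\begin{equation*}
  \Phi_t(r,x):=\bigl(r-\log \rho^\alpha_t(x),\,\varphi_t(x)\bigr)
\end{equation*}
on $W$; a direct calculation shows $\Phi_t$ is exact symplectic and generated by $K_t(r,x):=e^r h^\alpha_t(x)$, and that orbits $t\mapsto\Phi_t(r_0,x_0)$ project to $\nu_\varphi$-loops in $\Sigma$. Moreover, if $\gamma$ is a closed Reeb orbit of $\alpha$ with period $\eta$, then $u(t):=\Phi_t(\gamma(\eta t))$ is a $1$-periodic orbit of $\eta X_F+X_{K_t}$ on $W$, where $F$ is any defining function for $\Sigma=\{r=0\}$; conversely, every such orbit arises this way. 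Note that $u$ lies in the free homotopy class $\nu_\varphi$, and the Reeb orbit $\gamma$ inherits $\nu_\varphi$ after unwinding $\Phi$.

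This reformulation suggests studying the perturbed Rabinowitz action functional
\begin{equation*}
  \mathcal{A}(u,\eta):=\int_{S^1}u^*\lambda-\int_0^1 K_t(u(t))\,dt-\eta\int_0^1 F(u(t))\,dt
\end{equation*}
on the loop space of $W$ restricted to free homotopy class $\nu_\varphi$. At a critical point $(u,\eta)$, substituting $v(t):=\Phi_t^{-1}(u(t))$ (a Reeb orbit of period $\eta$) and using the defining equations gives $\mathcal{A}(u,\eta)=-\int_0^1 h^\alpha_t(v(t))\,dt$. The elementary bounds $-\|h^\alpha\|_-\le h^\alpha_t(v(t))\le\|h^\alpha\|_+$ then yield $-\|h^\alpha\|_-\le\eta\le\|h^\alpha\|_+$ once a critical point is produced, giving both the period estimate and, in particular, positivity of $\eta$.

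To produce the critical point I would use a Floer continuation in $\RFH$ starting from $K\equiv 0$ (where $\Phi\equiv\id$ and the $\nu_\varphi$-component of the loop space degenerates to $\nu_0$-loops) to the full perturbation $K_t$. The hypothesis $\|h^\alpha\|_{\mathrm{osc}}<\varrho^\alpha(\varphi)\cdot\wp(\alpha,\nu_0)$ supplies the a priori action window needed to rule out the creation of contractible Reeb orbits that could break the continuation class; the factor $\varrho^\alpha(\varphi)$ arises precisely when converting the $L^\infty$-oscillation of $h^\alpha$ to symplectic energy at different $r$-levels of $W$, reflecting the different $r$-shifts encoded by $\Phi_t$. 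The main obstacle will be a maximum principle / compactness argument for the Floer equation on the non-compact symplectization with the exponentially growing Hamiltonian $K_t=e^r h^\alpha_t$: one must show Floer trajectories remain in a compact region and that the Lagrange multiplier $\eta$ stays inside the claimed window, which is where the sharp form of the oscillation bound enters quantitatively.
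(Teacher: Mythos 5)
Your framework (lift the loop to the Hamiltonian system $K_t=e^r h_t$ on the symplectization, detect a translated-point-type critical point of a Rabinowitz functional by a continuation from $K\equiv 0$, then unwind to read off a Reeb orbit in $\nu_\varphi$) is indeed the one used in the paper. However, two essential steps are missing. First, you never normalize the contact Hamiltonian. The paper replaces $\psi_t$ by $\varphi_t=\theta_{-\int_0^t\varpi_s\,ds}\circ\psi_t$ with $\varpi_t=\min_\Sigma h_t$, so that the new Hamiltonian $h_t-\varpi_t$ is nonnegative with zero minimum; its $\|\cdot\|_+$-norm is then exactly $\|h\|_{\mathrm{osc}}$. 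All Floer energy estimates naturally produce bounds in terms of $\|\cdot\|_+$ of the Hamiltonian actually used, so without this reduction your continuation only closes up under the hypothesis $\|h^\alpha\|_+<\varrho\cdot\wp(\alpha,\nu_0)$, which is strictly weaker than the theorem (the Reeb flow itself has $\|h\|_{\mathrm{osc}}=0$ but $\|h\|_+>0$). The same reduction is what yields the lower bound: the period of the orbit is $\varpi+\eta$ with $\varpi=-\|h\|_->0$ and $\eta\ge 0$, which is how one rules out the constant solution. Your claimed derivation of the window from ``elementary bounds at the critical point'' does not work: at a critical point of the Rabinowitz functional the multiplier $\eta$ \emph{is} the action value, and there is no pointwise identity expressing $\eta$ as $\pm\int_0^1 h_t(v(t))\,dt$; the bounds $0\le\eta\le\|\mathcal H\|_{\mathrm{osc}}$ come from the energy identities along the two ends of the stretched continuation, not from the critical point equation.

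Second, the existence mechanism and the compactness are underspecified in a way that matters. $\RFH$ is not a defined invariant here: there is no filling in case (1), and the unperturbed functional is Morse--Bott degenerate with a whole copy of $\Sigma$ of constant critical points. The paper replaces ``nontriviality of a continuation map'' by a direct cobordism argument: a one-parameter family of homotopies $0\rightsquigarrow K\rightsquigarrow 0$ whose moduli space, if no critical point existed in the action window, would be a compact one-manifold with exactly one (transverse) boundary point, namely the constant solution at a fixed $z_0$. Finally, the hard part of compactness is not the maximum principle (which only controls $r\to\infty$) but the escape of trajectories to the negative end $r\to 0$ of the symplectization; this is excluded by an SFT-type neck-stretching argument which would otherwise produce a contractible Reeb orbit of period less than $\wp(\alpha,\nu_0)$, and it is precisely here that the conformal factor $\varrho^\alpha(\varphi)$ enters (and why it disappears in the Liouville-fillable case). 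Your sketch gestures at these issues but does not supply the arguments that make the stated constants correct.
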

Here we used the following notation: for a function $h : \Sigma \times [0,1] \to \R$,
\begin{equation}
  \| h \|_+ := \int_0^1 \max_{x \in \Sigma } h_t(x) \,dt, \qquad \| h\|_- := \int_0^1 \min_{x \in \Sigma} h_t(x)\,dt,
\end{equation}
and
\begin{equation}
  \| h \|_{\mathrm{osc}} := \| h\|_+ - \|h\|_-.
\end{equation}
Thus when $h$ is a non-negative function, one has $0 \le \| h \|_- \le \| h \|_+$. Moreover $\| h\|_{ \mathrm{ osc}} = 0$ if and only if $h_t$ is constant for each $t\in [0,1]$; 
thus if a path $\varphi_t$ has contact Hamiltonian $h^{\alpha}$ with $\| h^{\alpha} \|_{\mathrm{osc}} = 0$ 
then $\varphi_t = \theta^{\alpha}_{c(t)}$ for some $c:[0,1]\rightarrow \R$.

\begin{rem}
\label{cor:main_theorem_1} 
Theorem \ref{thm:thm1_intro} implies in particular that we have for any positive loop $\varphi$

\begin{equation}
\label{eq:dichotomy}
    \frac{\| h^{\alpha} \|_{\mathrm{osc}}}{\varrho^{\alpha}(\varphi)} \ge \wp(\alpha , \nu_0) \quad \text{ or } 
    \quad \| h^\alpha\|_+ \ge \wp( \alpha , \nu_{\varphi})
\end{equation}
A previous result in this direction is due to Casals-Sandon-Presas \cite{CasalsPresasSandon2016}. 
They proved that on overtwisted manifold there exists a (non-explicit) constant $C(\alpha)$ such that any 
positive loop $\varphi$ satisfies $\| h^{\alpha}\|_+ \ge C(\alpha) $. 
Moreover a proof of the fact that on any compact contact manifold it is impossible to have a 
positive contractible loop $\varphi$ which is generated by a contact Hamiltonian $h^{\alpha}$ with 
$ \| h^{\alpha} \|_+ < \wp(\alpha, \nu_0)$ has been recently announced by Sandon and will appear in \cite{Sandon201?}.
\end{rem}

\begin{rem}
Let us briefly explain the assumptions and the general scheme of the proof of Theorem \ref{thm:thm1_intro}. The Reeb orbit we find in the Theorem is obtained by studying a Rabinowitz action functional perturbed by the loop of contactomorphisms $\varphi$. Indeed, finding a critical point of this perturbed functional automatically produces a periodic Reeb orbit, see Figure \ref{pic:figure8}. The existence of critical points follows from a stretching argument. In order to control compactness during this stretching procedure we need the assumptions. More precisely, the norm of the contact Hamiltonian is relevant for estimating the energy of gradient trajectories. Since we are working in a symplectization the quantity $ \varrho^{\alpha}(\varphi)$ enters in order to apply SFT-compactness. This last step is simplified if we add the assumption of hypertightness or Liouville fillability, see Corollary \ref{thm:hypertight_small} and Theorem \ref{thm:thm1_intro_Liouville_fillable}.
\end{rem}

An interesting subclass of positive loops are contractible positive loops, since they can be used for contact squeezing, see \cite{EliashbergKimPolterovich2006}. Equation \eqref{eq:dichotomy} immediately implies that it is not possible to contract such a positive loop through positive loops to the trivial loop. For the standard contact sphere \cite{EliashbergKimPolterovich2006} proved a stronger result, namely the contact Hamiltonian of any contracting homotopy must become negative by a definite amount at some point. 

Examples of contact manifolds come from unit cotangent bundles $S^*B$ for Riemannian manifolds $(B,g)$. In this case the Reeb flow is the geodesic flow of $g$. For instance, if $(B,g)$ is $S^2$ with the round metric, the Reeb flow is a positive loop, since all geodesics are closed. Unit cotangent bundles rarely admit positive loops \cite{AlbersFrauenfelder2012,FrauenfelderLabrousseSchlenk2015}. However there are many classes of contact manifolds which do, the most ubiquitous of which is prequantisation spaces, see below. Moreover, many contact 3-manifolds also carry positive loops \cite{CasalsPresas2016}.

The estimates in \eqref{eq:dichotomy} is sharp in the following examples. The simplest is the sphere $S^{2n-1}$ equipped with its standard contact form $\alpha$. The Reeb flow is periodic, and we normalise the contact form $\alpha$ by requiring that the Reeb flow is one-periodic, and hence is a positive loop. Then by definition $\wp( \alpha, \nu_0) = 1$. Both the contact Hamiltonian and the conformal factor of the Reeb flow is the constant function 1. Thus the oscillation norm of the contact Hamiltonian is zero, and we have equality in the second of the two options in  \eqref{eq:dichotomy}. More generally consider a lens space $L(1,p) = S^{2n-1} / \Z_p$. Then  the rescaled contact form $p \alpha$ on $S^{2n-1}$ descends to a contact form $\bar{\alpha}$ on $L(1,p)$ whose Reeb flow  $\bar{\theta}$ is again  one-periodic. Now we have $\wp( \bar{\alpha}, \nu_0) = p$ and $\wp( \bar{\alpha}, \nu_{\bar{\theta}}) = 1$.

This is a special case of a phenomenon on more general  prequantisation spaces.  Here one begins with a closed symplectic manifold $(M,\omega)$ for which the de Rham cohomology class
$[\omega]$ has an integral lift in $\mbox{H}^{2}(M;\mathbb{Z})$.
Consider a circle bundle $p:\Sigma_k \rightarrow M$ with Euler class
$k[\omega]$ for some $k \in \Z$ with $k \ne 0$, and connection 1-form $\alpha_k$ with $p^{*}(k\omega)=-d\alpha_k$.
Then $(\Sigma_k,\alpha_k)$ is a contact manifold whose associated Reeb
flow is one-periodic (since we think of $S^1$ as $\R / \Z$). The closed Reeb orbits are the fibres of this bundle.
The long exact homotopy sequence of the fibration is
\begin{equation}
\pi_{2}(M)\overset{q_k}{\rightarrow}\pi_{1}(S^{1})\rightarrow\pi_{1}(\Sigma_k)\rightarrow\pi_{1}(M)\rightarrow0.
\end{equation}
The map $q_k$ is non-trivial if and only if the homotopy class of the fibre is torsion. If we identify $\pi_1(S^1) \cong \Z$ then the map $q_k$ agrees with $I_{\omega} : \pi_2(M) \to \Z$, where $I_{\omega}$ is given via Chern-Weil theory by integrating against $\omega$. In particular $\wp(\alpha_k, \nu_{ \theta_k}) = 1 $ and $\wp( \alpha_k , \nu_0)$ is equal to $+ \infty$ if the homotopy class of the  fibre is not torsion, i.e. if $I_{\omega}= 0$. Otherwise there exists $N_{\omega} \in \N$ such that $I_{\omega}(\pi_2(M)) = N_{\omega} \Z$ and $\wp( \alpha_k, \nu_0) = |k| N_{\omega}$.

In general there is no lower bound of the form $\|h^{\alpha}\|_-\leq \wp(\alpha, \nu_{\varphi})$ 
for a positive loop, even if it is of small oscillation. 
Indeed, consider a one-periodic Reeb flow $\theta^{\alpha}_t$ with contractible orbits. 
Then if $\varphi_t := \theta^{\alpha}_{2t}$ then $h^{\alpha} = 2$ and hence $ 2 = \| h^{\alpha} \|_- > 
\wp(\alpha, \nu_{\varphi}) = 1$.

\subsection*{Hypertight contact manifolds}
A contact manifold $(\Sigma, \xi)$ is called \emph{hypertight} if there exists $\alpha \in \mathcal{C}(\Sigma, \xi)$ with no contractible Reeb orbits. See for example \cite{ColinHonda2005} for a construction
of hypertight contact manifolds. The 3-torus $\mathbb{T}^{3}$ (equipped with
any one of the standard contact structures $\alpha_{k}=\cos(2\pi kr)ds+\sin(2\pi kr)dt$)
is a familiar example. Prequantisation spaces are hypertight if and only if if and only if the fibre is not torsion, as proved in our previous paper \cite{AlbersFuchsMerry2015}. In this case $\wp( \alpha ,\nu_0) = \infty$, and hence Theorem \ref{thm:thm1_intro} is applicable to \emph{all} positive loops. This yields the following statement, which is a quantatitive version of the main result of \cite{AlbersFuchsMerry2015}. 
\begin{cor}  \label{thm:hypertight_small} 
Suppose $(\Sigma ,\alpha)$ is a contact manifold such that $\alpha$ has no contractible Reeb orbits. Then given any loop of contactomorphisms $\varphi$ with $\|h^\alpha\|_{\mathrm{osc}}>0$, there exists a closed Reeb orbit belonging to $\nu_{\varphi}$ with period  $\eta$ such that 
\begin{equation}
\|h^{\alpha}\|_-< \eta \leq \|h^{\alpha}\|_+
\end{equation}
As a consequence, if $\varphi$ is a non-trivial non-negative loop, then
\begin{equation}
\|h^{\alpha}\|_+\geq \wp(\alpha,\nu_\varphi)
\end{equation} 
and thus on hypertight contact manifolds there are no $C^0$-small positive loops.
\end{cor}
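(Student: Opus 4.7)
The plan is to derive the corollary as a direct consequence of Theorem \ref{thm:thm1_intro}. The key observation is that hypertightness of $\alpha$ forces $\wp(\alpha, \nu_0) = +\infty$, since the defining infimum in \eqref{eq:min_period} is over the empty set. This trivialises the smallness hypothesis $\|h^{\alpha}\|_{\mathrm{osc}} < \varrho^{\alpha}(\varphi) \cdot \wp(\alpha, \nu_0)$ of Theorem \ref{thm:thm1_intro}: since any positive loop satisfies $\varrho^{\alpha}(\varphi) > 0$ by \eqref{eq:pi_alpha_phi} and its contact Hamiltonian has finite oscillation norm on the compact manifold $\Sigma$, the right-hand side is infinite while the left-hand side is finite.

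I would then invoke Theorem \ref{thm:thm1_intro} directly to produce a closed Reeb orbit in the class $\nu_{\varphi}$ with period $\eta$ satisfying $0 < -\|h^{\alpha}\|_- \leq \eta \leq \|h^{\alpha}\|_+$. The bound $\|h^{\alpha}\|_+ \geq \wp(\alpha, \nu_{\varphi})$ follows immediately from the definition of $\wp$, since $\eta$ realises a period in the class $\nu_{\varphi}$.

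For the final assertion about the non-existence of $C^0$-small positive loops, I would argue by contradiction: any positive loop $\varphi$ which is sufficiently $C^0$-close to the constant identity loop has the property that each orbit $t \mapsto \varphi_t(x)$ is a small $C^0$-loop and is therefore contractible, so that $\nu_{\varphi} = \nu_0$. The estimate from the previous step would then read $\|h^{\alpha}\|_+ \geq \wp(\alpha, \nu_0) = +\infty$, which is absurd. I do not anticipate any serious obstacle: the corollary is essentially a direct reading of Theorem \ref{thm:thm1_intro} in the special case where the first alternative of the dichotomy \eqref{eq:dichotomy} is vacuous, so all the work has already been carried out in the proof of that theorem.
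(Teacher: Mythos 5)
Your proposal is correct and follows exactly the paper's (implicit) argument: the paper likewise observes that hypertightness forces $\wp(\alpha,\nu_0)=+\infty$, so the smallness hypothesis of Theorem \ref{thm:thm1_intro} holds vacuously for every positive loop, and the corollary is read off directly. Your closing argument for the non-existence of $C^0$-small loops (a loop $C^0$-close to the constant loop has $\nu_\varphi=\nu_0$, forcing $\|h^\alpha\|_+\ge\wp(\alpha,\nu_0)=+\infty$) is a reasonable way to justify that final assertion, which the paper itself leaves unargued.
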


\subsection*{Fillable contact manifolds}

A contact manifold $(\Sigma,\xi)$ is \emph{Liouville fillable} if there exists a compact exact symplectic manifold $(W,\omega=d\lambda)$ with $\partial W=\Sigma$ such that if  $\alpha :=\lambda|_\Sigma$ then $\alpha \in \mathcal{C}(\Sigma,\xi)$. Moreover, we require the Liouville vector field associated with $\lambda$ points outward along $\Sigma$. Examples are $\Sigma=S^{2n+1}$ and unit cotangent bundles $S^*B$.

\begin{rem}
Everywhere where we treat Liouville fillable contact manifolds we redefine the free homotopy class  $\nu_0$ to be the free homotopy class of the constant loop in $[S^1,W]$. With this redefinition $ \wp(\alpha,  \nu_0) $ is still positive but becomes potentially smaller.
\end{rem}

\begin{thm}
\label{thm:thm1_intro_Liouville_fillable}
Suppose $(\Sigma ,\xi)$ is Liouville fillable and $\alpha\in \mathcal{C}(\Sigma,\xi)$. Let $\varphi$ be a loop of contactomorphisms with contact Hamiltonian $h^{\alpha}$. If 
\begin{equation}\label{eqn:thm1_intro_Liouville_fillable}
0<\| h^{\alpha} \|_{\mathrm{osc}} < \wp(\alpha,  \nu_0) 
\end{equation}
then there is a closed Reeb orbit belonging to the free homotopy class $\nu_{\varphi}$ with period $\eta$ such that 
$\|h^{\alpha}\|_-< \eta \leq \|h^{\alpha}\|_+$.
As a consequence, if $\varphi$ is a non-trivial non-negative loop with $\| h^{\alpha} \|_{\mathrm{osc}} <  \wp(\alpha,  \nu_0)$, then
\begin{equation}
  \wp(  \alpha, \nu_{ \varphi}) \le \| h^{\alpha} \|_+.
\end{equation}
\end{thm}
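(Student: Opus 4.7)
The plan is to mimic the proof of Theorem \ref{thm:thm1_intro}, replacing the symplectization of $\Sigma$ by the completion $\widehat W := W \cup_\Sigma ([1,\infty) \times \Sigma)$ of the Liouville filling. The key gain is that the filling caps the negative end, providing a non-trivial Floer-theoretic background class (e.g.\ the unit in symplectic cohomology $\mathrm{SH}^*(W)$ or the fundamental class in $\RFH(W)$). This background class allows us to detect a critical point of a suitably twisted action functional \emph{without} the conformal-factor weight $\varrho^\alpha(\varphi)$ that is unavoidable in the pure symplectization argument behind Theorem \ref{thm:thm1_intro}.

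First, I would extend $\varphi$ to a loop $\widehat\varphi_t$ of Hamiltonian symplectomorphisms on $\widehat W$. On the cylindrical end $[1,\infty)\times\Sigma$ with coordinate $r$, the natural formula $\widehat\varphi_t(r,x):=(r-\log\rho^\alpha_t(x),\,\varphi_t(x))$ preserves the Liouville form $e^{r}\alpha$ and is generated by a Hamiltonian $\widehat H_t$ equal to $e^{r}h^\alpha_t$ on the end; inside $W$ the extension can be chosen freely, subject to smoothness across $\Sigma \cong \{1\}\times \Sigma$. Positivity of $\varphi$ transfers to positivity of $\widehat H_t$ on the end.

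Next, I would set up a Rabinowitz-type action functional on an appropriate free loop space of $\widehat W$, twisted by $\widehat\varphi$, arranged so that critical points in the free homotopy class $\nu_\varphi$ correspond to closed Reeb orbits of $\alpha$ in $\nu_\varphi$, with action equal to the period. A min-max argument using the non-vanishing of Rabinowitz Floer homology of $(W,\lambda)$ then produces a critical point whose action lies in the window $[-\|h^\alpha\|_-,\|h^\alpha\|_+]$. The hypothesis $\|h^\alpha\|_{\mathrm{osc}}<\wp(\alpha,\nu_0)$ forbids any contractible Reeb orbit from having action inside this window, so the detected critical point must be a Reeb orbit in class $\nu_\varphi$, yielding the theorem.

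The main obstacle is twofold. First, one must construct the twisted Rabinowitz Floer homology in the filled setting and verify the action identity that places the critical point in the stated window \emph{without} a $\varrho^\alpha$-weight; this improvement relative to Theorem \ref{thm:thm1_intro} is precisely what the Liouville cap enables, since the filling lets us absorb the rescaling of $\alpha$ by the conformal factor into the geometry of $\widehat W$ rather than paying for it in the action estimate. Second, one must control non-compactness on the cylindrical end. The standard integrated maximum principle for Liouville fillings, combined with the positivity of $\widehat H_t$ at infinity, should provide the a priori $C^0$-bounds on Floer trajectories that are needed for the min-max argument to go through.
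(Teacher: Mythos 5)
Your overall strategy --- pass to the completion $\widehat W$ so that the filling caps the negative end, simplifies compactness, and removes the factor $\varrho^{\alpha}(\varphi)$ --- is indeed how the paper proceeds, and your extension of $\varphi$ to a Hamiltonian isotopy with Hamiltonian $r h^{\alpha}_t$ on the cylindrical end matches the paper's lift $\phi_t$. However, your existence mechanism has a genuine gap. You propose a min-max argument powered by the non-vanishing of $\mathrm{SH}^*(W)$ or of $\RFH$ of the filling, but these invariants vanish for the most basic Liouville fillings: $\mathrm{SH}^*(B^{2n})=0$, and $\RFH(S^{2n-1},B^{2n})=0$ because the sphere is displaceable in $\R^{2n}$. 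So for $\Sigma=S^{2n-1}$ --- the very example the theorem is applied to in order to recover Rabinowitz's result --- there is no non-trivial class to run min-max against. The paper avoids this entirely: it studies a one-parameter family of functionals $\mathcal{A}^{\sigma}_s$ interpolating between the trivial problem at $\sigma=0$ and the fully stretched problem at $\sigma=\infty$, and shows (Theorem \ref{thm:technical_theorem_about_paths}) that if no critical point of $\mathcal{A}_{\mathcal{H}}$ had critical value in $[0,\|h\|_+]$, the parametrized moduli space $\mathcal{M}$ would be a compact one-dimensional manifold with exactly one boundary point --- a contradiction that needs no homology group to be non-zero.

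The second gap is your final step. You claim that $\|h^{\alpha}\|_{\mathrm{osc}}<\wp(\alpha,\nu_0)$ forbids contractible Reeb orbits from having action in the window $[-\|h^{\alpha}\|_-,\|h^{\alpha}\|_+]$. This is false: the window has width $\|h^{\alpha}\|_{\mathrm{osc}}$ but need not be contained in $(0,\wp(\alpha,\nu_0))$. For instance, for $h^{\alpha}\equiv 10$ on a manifold with a one-periodic Reeb flow with contractible orbits, the hypothesis holds (the oscillation is $0$) yet the window is $\{10\}$, the period of a contractible orbit. In the paper the oscillation hypothesis is used only in the compactness step, to exclude breaking along contractible Reeb orbits, whose energy cost would be at least $\wp(\alpha,\nu_0)$ and hence exceed the available energy $\|\mathcal{H}\|_{\mathrm{osc}}\le e^{2\delta}\|h\|_+$. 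The class $\nu_{\varphi}$ and the period window are instead obtained from the structure of the critical points: after normalising the loop to a nonnegative path one has $\varphi_1^{-1}=\theta_{\varpi}$ with $\varpi=-\|h^\alpha\|_-$, so translated points are genuine closed Reeb orbits of period $\varpi+\eta$, and a figure-eight homotopy of the contractible critical loop shows this orbit lies in $\nu_{\varphi}$. You would need to supply both of these arguments for your proposal to close.
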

The reason that the term $\varrho^\alpha(\varphi)$ does not show up on the right-hand side
of \eqref{eqn:thm1_intro_Liouville_fillable} is due to the fillability of $\Sigma$ which simplifies the compactness proof. 
\begin{cor}
\label{cor:main_theorem_1_Liouville_fillable}
Suppose $(\Sigma ,\xi)$ is Liouville fillable and $\alpha\in \mathcal{C}(\Sigma,\xi)$. Let $\varphi$ be a non-trivial non-negative loop with contact Hamiltonian $h^{\alpha}$. Then
\begin{equation}
\label{eq:main_theorem_1_eq}
 \| h^{\alpha} \|_+ \ge \min\{ \wp( \alpha, \nu_0), \wp( \alpha , \nu_{\varphi}) \}
\end{equation}
and thus on Liouville fillable contact manifolds there are no $C^0$-small positive loops.
\end{cor}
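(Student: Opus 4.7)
The plan is to deduce \eqref{eq:main_theorem_1_eq} directly from Theorem \ref{thm:thm1_intro_Liouville_fillable} by a short case analysis. The key observation is that for a positive contact Hamiltonian on a compact manifold one has the strict inequality $\|h^\alpha\|_{\mathrm{osc}} < \|h^\alpha\|_+$, which allows us to trade the oscillation-norm hypothesis of Theorem \ref{thm:thm1_intro_Liouville_fillable} for a hypothesis on $\|h^\alpha\|_+$ alone.

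First I would verify this strict inequality. Since $\varphi$ is positive and $\Sigma$ is compact, for each $t \in [0,1]$ the minimum $m_t := \min_{x \in \Sigma} h_t^\alpha(x)$ is strictly positive, so
$$ -\|h^\alpha\|_- \;=\; \int_0^1 m_t \, dt \;>\; 0, $$
and consequently $\|h^\alpha\|_{\mathrm{osc}} = \|h^\alpha\|_+ + \|h^\alpha\|_- < \|h^\alpha\|_+$.

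Next I would split into two cases. If $\|h^\alpha\|_+ \geq \wp(\alpha,\nu_0)$, then \eqref{eq:main_theorem_1_eq} is immediate. Otherwise $\|h^\alpha\|_+ < \wp(\alpha,\nu_0)$, which combined with the previous step yields $\|h^\alpha\|_{\mathrm{osc}} < \wp(\alpha,\nu_0)$. Theorem \ref{thm:thm1_intro_Liouville_fillable} then produces a closed Reeb orbit in the class $\nu_\varphi$ of period at most $\|h^\alpha\|_+$, so $\wp(\alpha,\nu_\varphi) \leq \|h^\alpha\|_+$, and \eqref{eq:main_theorem_1_eq} follows again.

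Finally, for the non-existence of small positive loops, compactness of $\Sigma$ together with the non-vanishing of $R_\alpha$ gives a uniform positive lower bound $\wp_{\min}(\alpha) := \inf_{\nu \in [S^1,\Sigma]} \wp(\alpha,\nu) > 0$; in fact $\min\{\wp(\alpha,\nu_0),\wp(\alpha,\nu_\varphi)\} \geq \wp_{\min}(\alpha)$ regardless of the class $\nu_\varphi$. Hence \eqref{eq:main_theorem_1_eq} forces $\|h^\alpha\|_+ \geq \wp_{\min}(\alpha)$ for every positive loop, ruling out positive loops whose contact Hamiltonian is arbitrarily small in $C^0$. All the substantive work sits in Theorem \ref{thm:thm1_intro_Liouville_fillable}; this corollary is purely a repackaging, and I do not anticipate any real obstacle.
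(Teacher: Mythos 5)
Your proof is correct and follows essentially the same route as the paper: the strict inequality $\|h^{\alpha}\|_{\mathrm{osc}} < \|h^{\alpha}\|_+$ for positive $h^{\alpha}$, followed by the case split on whether $\|h^{\alpha}\|_+ < \wp(\alpha,\nu_0)$ and an application of Theorem \ref{thm:thm1_intro_Liouville_fillable}. The closing remark on the uniform lower bound for Reeb periods is a correct (and slightly more explicit) justification of the final assertion than the paper gives.
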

\begin{proof}
Since $h^{\alpha} $ is non-negative one has $ \| h^{\alpha } \|_{\mathrm{osc}} \leq \| h^{\alpha} \|_+$. Thus if $\| h^{\alpha} \|_+ <  \wp( \alpha, \nu_0)$ then the second part of Theorem \ref{thm:thm1_intro_Liouville_fillable} implies that $\wp( \alpha , \nu_{\varphi}) \le \|h^{\alpha} \|_+$.
\end{proof}

Apart from providing lower bounds on the size of positive loops, Theorem \ref{thm:thm1_intro_Liouville_fillable} and Corollary \ref{cor:main_theorem_1_Liouville_fillable} are also quantitative existence results for closed Reeb orbits (the Weinstein Conjecture) in the presence of a non-trivial non-negative loop. This extends our earlier article \cite{AlbersFuchsMerry2015}. We illustrate this for the sphere $S^{2n-1}$. One direction is a famous result of Rabinowitz \cite{Rabinowitz1979}.

\begin{cor}
Suppose $\Sigma \subset \R^{2n}$ is a hypersurface which bounds a star-shaped region and is contained in the region between the two spheres $\partial B(r)$ and $\partial B(R)$ of radius $0 < r <R$. Then $\Sigma$ admits a closed characteristic of period $T>0$ such that  $ \pi r^2 \le T \le \pi R^2$.
\end{cor}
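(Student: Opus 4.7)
The plan is to apply Theorem~\ref{thm:thm1_intro_Liouville_fillable} to $\Sigma$ equipped with the contact form $\alpha := \lambda|_\Sigma$, where $\lambda = \tfrac{1}{2}\sum_i(x_i\,dy_i - y_i\,dx_i)$. The compact star-shaped region bounded by $\Sigma$ provides the Liouville filling, and closed characteristics of $\Sigma$ are exactly the closed Reeb orbits of $\alpha$. I first identify $\Sigma$ with $S^{2n-1}$ via the radial diffeomorphism $\Psi : S^{2n-1}\to\Sigma$, $u\mapsto f(u)u$, where $r\le f\le R$ by hypothesis. Since $\lambda$ scales quadratically under dilations of $\R^{2n}$, a direct computation yields $\Psi^*\lambda = f^2\,\lambda|_{S^{2n-1}}$.

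The candidate positive loop is obtained by transferring the Hopf flow. Let $\theta_s(z)=e^{2is}z$ denote the Reeb flow of $\lambda|_{S^{2n-1}}$, which has minimal period $\pi$, and set
\begin{equation}
\varphi_t := \Psi\circ\theta_{\pi t}\circ\Psi^{-1},\qquad t\in\R/\Z.
\end{equation}
Then $\varphi$ is a $1$-periodic loop of contactomorphisms of $(\Sigma,\xi)$, and it is positive because positivity is intrinsic to the contact structure. Using $\Psi^*\lambda = f^2\,\lambda|_{S^{2n-1}}$, the contact Hamiltonian of $\varphi$ with respect to $\alpha$ is the time-independent function $h(y)=\pi\,|y|^2$. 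From $r\le|y|\le R$ on $\Sigma$ I conclude
\begin{equation}
\pi r^2 \le -\|h\|_- \le \|h\|_+ \le \pi R^2,\qquad \|h\|_{\mathrm{osc}}\le\pi(R^2-r^2).
\end{equation}
Moreover $\nu_\varphi=\nu_0$, because $\Sigma\cong S^{2n-1}$ is simply connected (the case $n=1$ is immediate, as $\Sigma$ itself is a closed characteristic whose period equals the enclosed area).

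The final step is a case distinction. If $\|h\|_{\mathrm{osc}}<\wp(\alpha,\nu_0)$, Theorem~\ref{thm:thm1_intro_Liouville_fillable} immediately produces a closed Reeb orbit in $\nu_\varphi=\nu_0$ whose period $T$ lies in $[-\|h\|_-,\|h\|_+]\subseteq[\pi r^2,\pi R^2]$, which is what is required. Otherwise $T_0:=\wp(\alpha,\nu_0)\le\|h\|_{\mathrm{osc}}\le\pi(R^2-r^2)$, so $\Sigma$ already carries a closed Reeb orbit of some short period $T_0$; its $k$-fold iterate with $k:=\lceil\pi r^2/T_0\rceil$ is then a closed characteristic of period $kT_0\in[\pi r^2,\pi r^2+T_0]\subseteq[\pi r^2,\pi R^2]$, again as desired. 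The only substantive calculation in this plan is the identification $h(y)=\pi|y|^2$; the dichotomy and the iteration step are then purely formal.
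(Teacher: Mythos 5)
Your proposal is correct and is essentially the paper's own argument: the paper likewise identifies $\Sigma$ with the round sphere carrying the rescaled form $f\alpha$, takes the (reparametrised) Hopf/Reeb flow as the positive loop with Hamiltonian taking values in $[\pi r^2,\pi R^2]$ and oscillation at most $\pi(R^2-r^2)$, and then runs the same dichotomy — either Theorem \ref{thm:thm1_intro_Liouville_fillable} applies directly, or $\wp(\alpha,\nu_0)\le\pi(R^2-r^2)$ and one iterates the short orbit into the window $[\pi r^2,\pi R^2]$. The only cosmetic difference is that you compute the contact Hamiltonian $\pi|y|^2$ directly on $\Sigma$ via the radial diffeomorphism rather than invoking the transformation formula \eqref{eq:how_h_changes} on $S^{2n-1}$.
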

\begin{proof}
We recall from above that we have normalised the standard contact form $\alpha$ on $S^{2n-1}$ so that its Reeb flow $\theta^{\alpha}_t$ is one-periodic. Thus the assumptions of the Corollary translate to considering a contact form $f \alpha$ on $S^{2n-1}$ with $f :S^{2n-1} \to [\pi r^2, \pi R^2]$. The contact Hamiltonian of $\theta_t^{\alpha}$ with respect to the contact form $f \alpha $ is  the function $f \circ \theta^{\alpha}_{-t}$, see \eqref{eq:how_h_changes} below. 

There are two options. Either $\wp( f \alpha ,\nu_0) \le \pi(R^2 - r^2)$ or $\wp( f \alpha , \nu_0) > \pi(R^2 - r^2) $.  In the former case there is nothing to prove, since by iterating the orbit with period  $\wp( f \alpha ,\nu_0)$ we obtain one with period $\pi r^2 \le T \le \pi R^2$. Thus assume the latter. Then since $ \| f \|_{ \mathrm{osc}} 
\le \pi(R^2 - r^2)$, Theorem \ref{thm:thm1_intro_Liouville_fillable} is applicable, and we deduce the existence of a closed contractible Reeb orbit with period $T >0$ such that $\pi r^2 \le T \le \pi R^2$. 
\end{proof} 
\begin{rem}
With a little more work it should be possible to extend Theorem \ref{thm:thm1_intro_Liouville_fillable} to weaker notions of fillability such as $\Sigma$ being of contact type with a semi-positive filling. However, we won't pursue this in this article.

A similar argument as in Corollary \ref{cor:main_theorem_1_Liouville_fillable} works in the following setting. Suppose  $\Sigma$ is a  fibrewise starshaped hypersurface in a negative line bundle, which is either hypertight or suitably fillable. Assume that $\Sigma$ is ``squeezed'' between two circle bundles $S_r$ and $S_R$ of radius $0 < r <R$ (these correspond to prequantisation spaces as described above). Then either $\Sigma$ admits a closed contractible Reeb orbit with period less than $R^2-r^2$, or there exists a closed Reeb orbit in the free homotopy class $\nu_{\theta}$ of the Reeb flow $\theta_t$ (which is automatically a loop) with period in between $r^2$ and $R^2$. In this setting, the existence of a closed Reeb orbit is already known (it can be deduced from our earlier article \cite{AlbersFuchsMerry2015}, for example), but we believe the period bounds are new. This should be compared to work of Gutt \cite[Theorem 1.6]{Gutt2015}, who under additional hypotheses also deduced multiplicity results. 
\end{rem}

\subsection*{Minimal positive loops}

The following result shows that certain periodic Reeb flows are the \emph{unique} minimizers of the $\|\cdot\|_+$-norm 
among positive loops with a fixed associated free homotopy class. It applies in particular 
to periodic Reeb flows whose associated free homotopy class is not torsion. We recall that for a contact form $\alpha$ we denote by $\theta^\alpha$ the positive path $t\rightarrow \theta_{t}^\alpha$ given by the Reeb flow.
\begin{thm}\label{thm:minimal_periodic}
Suppose $(\Sigma,\xi)$ is hypertight (resp. Liouville fillable) and $\alpha\in \mathcal{C}(\Sigma,\xi)$ admits no contractible Reeb orbits (resp. is induced from a Liouville filling). 
Assume that the Reeb flow $\theta^\alpha$ is 1-periodic and $\wp(\alpha,\nu_0)\geq1$. Then any positive loop $\varphi$ with $\nu_\varphi=\nu_{\theta^\alpha}$
satisfies 
\begin{equation}%\label{eqn:T_le_h_+_norm}
\|h^\alpha\|_+\ge 1
\end{equation} 
and equality holds if and only if $\varphi=\theta^\alpha$. 
\end{thm}

\begin{rem} 
The assumptions $\wp(\alpha,\nu_0)\geq1$ and $\theta^\alpha$ being $1$-periodic imply that $\wp(\alpha,\nu_{\theta^\alpha})\geq1$. Indeed, if we assume that $0<\delta:=\wp(\alpha,\nu_{\theta^\alpha})<1$ we find a point $x\in \Sigma$ lying on a $\delta$-periodic Reeb orbit in class $\nu_{\theta^\alpha}$. Moreover, $x$ lies  on a $1$-periodic Reeb orbit in class $\nu_{\theta^\alpha}$ since the Reeb flow is $1$-periodic. This yields a $(1-\delta)$-periodic Reeb orbit through $x$ in class $\nu_0$ which contradicts the assumption $\wp(\alpha,\nu_0)\geq1$. 

We conclude $\wp(\alpha,\nu_0)\geq1$ and $\wp(\alpha,\nu_{\theta^\alpha})\geq1$ and thus the inequality is already contained in Corollary \ref{thm:hypertight_small} resp.~Corollary \ref{cor:main_theorem_1_Liouville_fillable}. The equality case requires an additional argument, though.

As an example consider the standard sphere $(S^{2n-1},\xi)$ with contact form $\alpha$ whose Reeb flow is periodic with $\wp(\alpha,\nu_0)=1$. Then the above result shows in particular that the contact Hamiltonian $h^\alpha$ of any positive loop $\varphi$ on $(S^{2n-1},\xi)$
satisfies $\|h^\alpha\|_+\geq 1$ with equality if and only if $\varphi$ coincides with the Reeb flow.

The assumption that the Reeb flow is periodic with $\wp(\alpha,\nu_0)\geq1$ is important. The rational ellipsoid $E(\tfrac12,1)=\{(z,w)\in \C^2\mid \pi(2|z|^2+|w|^2)=1\}$ gives rise to a contact form $f\alpha$ on $S^3$ with $f$ taking values in $[1,2]$. Its Reeb flow is 1-periodic but the exceptional orbit $\{w=0\}$ gives rise to $\wp(f\alpha,\nu_0)=\tfrac12$. Now we consider the Reeb flow of $\alpha$ as a positive loop $\varphi$ on the contact manifold $(S^3,f\alpha)$. Its contact Hamiltonian is $h^{f\alpha}=\tfrac1f$. Thus $\|h^{f\alpha}\|_+=1$ and $\varphi\neq\theta^{f\alpha}$. Therefore, Theorem \ref{thm:minimal_periodic} becomes false if we drop the assumption $\wp(f\alpha,\nu_0)\geq1$.
\end{rem}

\begin{proof}
As pointed out it is sufficient to focus on the case of equality, since the inequality is already contained in Corollary \ref{thm:hypertight_small} resp.~Corollary \ref{cor:main_theorem_1_Liouville_fillable}. Thus we assume $\varphi$ is a positive loop with $\nu_\varphi=\nu_{\theta_1^\alpha}$ and contact Hamiltonian $h^\alpha$ satisifying $\|h^\alpha\|_+=1$. Moreover, from now on we will drop the superscript $\alpha$ and recall that
\begin{equation}
  \| h \|_+ = \int_0^1 \max_{x \in \Sigma } h_t(x) \,dt.
\end{equation}
We define the diffeomorphism $\tau:[0,1]\to[0,1]$ by
\begin{equation}
\tau(t):=\frac{\int_0^t\max_{x \in \Sigma } h_s(x)\,ds}{\int_0^1\max_{x \in \Sigma } h_s(x)\,ds}=
\frac{1}{\|h\|_+}\int_0^t\max_{x \in \Sigma } h_s(x)\,ds
\end{equation}
and consider the loop $\{\hat\varphi_t\}_{t\in S^1}$ defined by
\begin{equation}
\hat\varphi_{t}:=\varphi_{\tau^{-1}(t)}.
\end{equation}
The contact Hamiltonian $h_t$ transforms appropriately under this reparametrisation 
of $\varphi=\{\varphi_t\}$, i.e.~$\hat\varphi:=\{\hat\varphi_t\}$ has contact Hamiltonian 
$\hat h_t(x)=(\tau^{-1})'(t)\cdot h_{\tau^{-1}(t)}(x)$ and therefore $\hat\varphi$ still is a positive loop. Moreover, 
the reparametrisation guarantees that we have
for all $t\in S^1$ the equality
\begin{equation}\label{eqn:max_h=1}
\max_{x\in\Sigma} \hat h_t(x)=\frac{\|h\|_+}{\displaystyle\max_{x\in\Sigma}h_{\tau^{-1}(t)}(x)}\cdot \max_{x\in\Sigma}h_{\tau^{-1}(t)}(x)=\|h\|_+.
\end{equation}
%for all $t\in S^1$, i.e. 
%\begin{equation}
%\max_{x,t}  \hat h_t(x) = \| h \|_+ =1.
%\end{equation}
Of course, we have $\|\hat h\|_\pm=\| h\|_\pm$ and therefore
\begin{equation}\label{eqn:h_hat_and_h_have_same_osc_norm}
\|\hat h\|_\mathrm{osc}=\| h\|_\mathrm{osc}\;.
\end{equation}
The contact Hamiltonian
\begin{equation}
k_t(x):=1-\hat h_{-t}(x)
\end{equation}
generates
\begin{equation}
\psi_t=\theta_{t}\circ\hat\varphi_{-t}\;.
\end{equation}
Since the Reeb flow and $\hat\varphi$ are loops so is $\psi=\{\psi_t\}$. Moreover, inequality \eqref{eqn:max_h=1} together with the positivity of $\varphi$ implies
\begin{equation}\label{eqn:max_k<1}
0\leq k_t(x)<1
\end{equation} 
and 
\begin{equation}
\|k\|_\mathrm{osc}=\|\hat h\|_\mathrm{osc}=\|\hat h \|_+-\|\hat h \|_-<1\;.
\end{equation}
It follows from the assumption that $\nu_\psi=\nu_0$. Thus, using Corollary \ref{thm:hypertight_small} resp.~Theorem \ref{thm:thm1_intro_Liouville_fillable} it follows that the non-negative loop $\psi$ 
is necessarily trivial; thus we obtain the equality $\varphi=\theta_{1}$ as claimed. 
\end{proof}

\subsection*{Contact systolic inequalities}
In the following we assume that $(\Sigma, \xi)$ admits a positive loop $\varphi$ of contactomorphisms. Corollary \ref{cor:main_theorem_1} implies the following $L^{\infty}$-contact systolic inequality, where the $L^{\infty}$ refers to the function $f$. 
\begin{cor}\label{cor:ugly_systolic_inequality}
For all smooth functions $f : \Sigma \to (0, \infty)$, one has
\begin{equation}
   \min\bigg\{ \frac{\min f}{\max f} \cdot \varrho^{\alpha}(\varphi) \cdot\wp( f\alpha, \nu_0), \wp(f \alpha , \nu_{\varphi}) \bigg\} \le \| h^{\alpha} \|_+ \cdot \max f
\end{equation}
where $h^\alpha$ is the contact Hamiltonian of the positive loop $\varphi$ with respect to a contact form $\alpha$, and $\varrho^{\alpha}(\varphi)$ was defined in \eqref{eq:pi_alpha_phi}. 
\end{cor}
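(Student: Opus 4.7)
The plan is to apply Theorem \ref{thm:thm1_intro} to the rescaled contact form $f\alpha$ rather than $\alpha$, with the same positive loop $\varphi$, and then translate all quantities back to $\alpha$ and $f$. The hypotheses of Theorem \ref{thm:thm1_intro} produce a dichotomy; the two branches of this dichotomy will correspond precisely to the two terms inside the $\min$ on the left-hand side of the inequality.

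The first step is to record how the two numerical invariants of $\varphi$ transform when one passes from $\alpha$ to $f\alpha$. For the contact Hamiltonian, unwinding the definition gives
\begin{equation*}
  h^{f\alpha}_t(y) \;=\; f\bigl(\varphi_t^{-1}(y)\bigr)\, h^{\alpha}_t(y),
\end{equation*}
from which $\|h^{f\alpha}\|_+ \le (\max f)\,\|h^{\alpha}\|_+$ follows immediately (and in particular $h^{f\alpha}$ is again positive). For the conformal factor, the relation $\varphi_t^*(f\alpha) = (f\circ\varphi_t)\,\rho^{\alpha}_t\,\alpha$ yields
\begin{equation*}
  \rho^{f\alpha}_t(x) \;=\; \frac{f(\varphi_t(x))}{f(x)}\,\rho^{\alpha}_t(x),
\end{equation*}
so the pointwise bound $f(\varphi_t(x))/f(x) \ge \min f/\max f$ gives $\varrho^{f\alpha}(\varphi) \ge (\min f/\max f)\,\varrho^{\alpha}(\varphi)$. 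Both inequalities are straightforward but are the only substantive input; everything else is formal.

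The second step is to run the dichotomy of Remark \ref{cor:main_theorem_1} for the loop $\varphi$ with respect to $f\alpha$. Suppose for contradiction that
$\|h^{\alpha}\|_+\cdot\max f$ is strictly smaller than both quantities inside the $\min$. Since $h^{f\alpha}$ is positive one has $\|h^{f\alpha}\|_{\mathrm{osc}} < \|h^{f\alpha}\|_+$, and combining with the first transformation formula gives
\begin{equation*}
  \|h^{f\alpha}\|_{\mathrm{osc}} \;<\; \|h^{f\alpha}\|_+ \;\le\; (\max f)\|h^{\alpha}\|_+ \;<\; \tfrac{\min f}{\max f}\,\varrho^{\alpha}(\varphi)\,\wp(f\alpha,\nu_0) \;\le\; \varrho^{f\alpha}(\varphi)\,\wp(f\alpha,\nu_0),
\end{equation*}
which is exactly the hypothesis of Theorem \ref{thm:thm1_intro} for the contact form $f\alpha$. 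The conclusion then forces $\wp(f\alpha,\nu_\varphi) \le \|h^{f\alpha}\|_+ \le (\max f)\|h^{\alpha}\|_+$, contradicting the second assumed strict inequality.

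I do not foresee any genuine obstacle here: the only potential pitfall is making a sign or direction error in the two transformation laws, since a flipped inequality would ruin the chain above. The statement is essentially a formal rescaling consequence of Theorem \ref{thm:thm1_intro}, and the fact that the awkward factor $\min f/\max f$ multiplies only the $\nu_0$-term (and not the $\nu_\varphi$-term) traces back precisely to the fact that $\varrho^{\alpha}(\varphi)$ enters the hypothesis of Theorem \ref{thm:thm1_intro} but not its conclusion.
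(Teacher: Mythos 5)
Your proposal is correct and follows exactly the paper's route: record the transformation laws \eqref{eq:how_h_changes} and \eqref{eq:how_rho_changes} for the contact Hamiltonian and conformal factor under $\alpha\mapsto f\alpha$, then apply the dichotomy of Remark \ref{cor:main_theorem_1} to the contact form $f\alpha$. You merely spell out the resulting bounds $\|h^{f\alpha}\|_+\le(\max f)\,\|h^{\alpha}\|_+$ and $\varrho^{f\alpha}(\varphi)\ge(\min f/\max f)\,\varrho^{\alpha}(\varphi)$, which the paper leaves implicit.
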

We point out that unless $f$ is a constant function, there is typically no relation between $\wp(\alpha , \nu)$ and $\wp( f\alpha, \nu) $. 
\begin{proof}
Since 
\begin{equation}
\label{eq:how_h_changes}
  h^{f \alpha}_t(x) = f \big(\varphi_t^{-1}(x) \big) h^{\alpha}_t(x)
\end{equation}
and 
\begin{equation}
\label{eq:how_rho_changes}
\rho^{f \alpha}_t(x) = \frac{f (\varphi_t(x))}{f(x)}\rho^{\alpha}_t(x)
\end{equation}
the assertion immediate follows from Corollary \ref{cor:main_theorem_1} applied to the contact form $f\alpha$.
\end{proof} 
The systolic inequality simplifies in certain situations.
\begin{cor}
Suppose $(\Sigma, \xi)$ is hypertight, and $\alpha \in \mathcal{C}(\Sigma, \xi)$ has no contractible Reeb orbits. Let $\varphi$ denote a positive loop with contact Hamiltonian  $h^{\alpha}$. Then  for any smooth function $f : \Sigma \to (0 , \infty)$, one has
\begin{equation}
\wp(f \alpha , \nu_{\varphi} ) \le  \| h^{\alpha} \|_+  \cdot \max f.
\end{equation}
\end{cor}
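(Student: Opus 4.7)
The plan is to apply the unlabelled hypertight Corollary of the preceding subsection to the rescaled contact form $f\alpha$ instead of $\alpha$, and then to translate the conclusion back using the transformation law for contact Hamiltonians. Since positivity of a loop of contactomorphisms depends only on the cooriented contact structure $\xi$, not on the defining form, $\varphi$ is also a positive loop with respect to $f\alpha$; by \eqref{eq:how_h_changes} its contact Hamiltonian with respect to $f\alpha$ is $h^{f\alpha}_t(x) = f(\varphi_t^{-1}(x))\,h^\alpha_t(x)$, which immediately yields
\[
  \|h^{f\alpha}\|_+ \;=\; \int_0^1 \max_{x \in \Sigma} f(\varphi_t^{-1}(x))\,h^\alpha_t(x)\,dt \;\le\; (\max f) \cdot \|h^\alpha\|_+.
\]
Granting that the hypertight Corollary applies to $(\Sigma, f\alpha)$ and $\varphi$, it produces a closed Reeb orbit of $f\alpha$ in the class $\nu_\varphi$ of period at most $\|h^{f\alpha}\|_+$, and combining with the displayed bound gives exactly $\wp(f\alpha, \nu_\varphi) \le \|h^\alpha\|_+ \cdot \max f$.

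The main obstacle is verifying the hypothesis of the hypertight Corollary for the form $f\alpha$, namely that $f\alpha$ itself has no contractible closed Reeb orbits, i.e.\ $\wp(f\alpha, \nu_0) = \infty$. The hypothesis supplies this only for $\alpha$, and since $R_\alpha$ and $R_{f\alpha}$ are not in general proportional, the property does not transfer automatically under conformal rescaling. The assumption that $(\Sigma, \xi)$ is hypertight as a cooriented contact structure, rather than merely that the specific form $\alpha$ is hypertight, is what is meant to bridge this gap: one must argue that the absence of contractible Reeb orbits is a property of $\xi$ itself, independent of the choice of defining form in $\mathcal{C}(\Sigma, \xi)$. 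I would try to establish this either via a direct continuation argument along the path of contact forms $\alpha_s := (1 + s(f-1))\alpha$, $s \in [0,1]$, showing that no contractible Reeb orbit can be born during the homotopy, or by invoking an invariance of the relevant Floer-theoretic moduli spaces in the symplectization underlying the proof of Theorem \ref{thm:thm1_intro}.

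Once $\wp(f\alpha, \nu_0) = \infty$ is in hand, an equivalent and slightly more streamlined route is to appeal to Corollary \ref{cor:ugly_systolic_inequality} as a black box: the first entry $\tfrac{\min f}{\max f}\varrho^\alpha(\varphi)\wp(f\alpha,\nu_0)$ of the minimum on its left-hand side becomes infinite, so the inequality collapses to precisely $\wp(f\alpha, \nu_\varphi) \le \|h^\alpha\|_+ \cdot \max f$, completing the proof.
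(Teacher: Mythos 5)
Your reduction is the intended one: this corollary is meant to be read off from Corollary \ref{cor:ugly_systolic_inequality} (equivalently, from Theorem \ref{thm:thm1_intro} applied to the rescaled form $f\alpha$), and your estimate $\|h^{f\alpha}\|_+ \le (\max f)\cdot\|h^{\alpha}\|_+$ via \eqref{eq:how_h_changes} is correct. You have also put your finger on exactly the point where the deduction is incomplete: to make the first entry of the minimum in Corollary \ref{cor:ugly_systolic_inequality} drop out one needs $\wp(f\alpha,\nu_0)=\infty$, or at least $\wp(f\alpha,\nu_0)>\frac{(\max f)^2}{\min f}\cdot\frac{\|h^{\alpha}\|_+}{\varrho^{\alpha}(\varphi)}$, whereas the hypothesis only controls $\wp(\alpha,\nu_0)$; the same issue appears if one instead reruns the Floer-theoretic argument with base form $f\alpha$, since the breaking analysis is governed by contractible Reeb orbits of the form actually being used.

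However, the two strategies you propose for closing this gap cannot work, because the absence of contractible closed Reeb orbits is genuinely a property of the individual contact form and not of the contact structure $\xi$. A continuation argument along $\alpha_s=(1+s(f-1))\alpha$ has no mechanism for preventing closed orbits from being born: periodic Reeb orbits appear and disappear freely under deformations of the form, and by modifying $f$ inside a single Darboux ball one can implant a small contractible closed Reeb orbit of $f\alpha$ even when $\alpha$ has none. Likewise, invariance of Floer-theoretic moduli spaces constrains only homology, not the existence of individual orbits, which may occur in cancelling pairs or contribute trivially. So this step should not be presented as provable. What the stated hypotheses actually yield is the unsimplified inequality of Corollary \ref{cor:ugly_systolic_inequality}, or, arguing as in the Liouville fillable case, a bound on $\min\{\wp(f\alpha,\nu_0),\wp(f\alpha,\nu_{\varphi})\}$; the clean conclusion $\wp(f\alpha,\nu_{\varphi})\le\|h^{\alpha}\|_+\cdot\max f$ requires the additional assumption that $f\alpha$ itself has no (sufficiently short) contractible Reeb orbits. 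The paper gives no proof of this corollary and no argument for transferring hypertightness from $\alpha$ to $f\alpha$, so the honest course is to flag this as a gap in the hypotheses rather than to attempt the continuation argument.
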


\begin{cor}
Suppose $(\Sigma, \xi)$ is Liouville fillable with filling $(W,d \lambda)$. Let $\alpha := \lambda|_{\Sigma}$. Suppose $\varphi$ denotes a positive loop, and denote its contact Hamiltonian with respect to $\alpha$ by $h^{\alpha}$. 
Then for all smooth functions $f : \Sigma \to (0, \infty)$, one has
\begin{equation}
   \min\big\{ \wp( f\alpha, \nu_0), \wp(f \alpha , \nu_{\varphi}) \big\} \le \| h^{\alpha} \|_+ \cdot\max f\end{equation}
where $h^\alpha$ is the contact Hamiltonian of the positive loop $\varphi$ with respect to a contact form $\alpha$.
\end{cor}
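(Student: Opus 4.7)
The plan is to reduce the statement directly to Corollary \ref{cor:main_theorem_1_Liouville_fillable} applied to the rescaled contact form $f\alpha$, exactly as was done in the proof of Corollary \ref{cor:ugly_systolic_inequality}, but now using the Liouville-fillable version of the main theorem instead of the general hypertight/contractible one. This is possible because Liouville fillability depends only on the underlying contact structure $\xi$, not on the particular choice of contact form: if $\alpha = \lambda|_\Sigma$ makes $(\Sigma,\xi)$ fillable, then so does $f\alpha$ for any positive $f$ (one just reglues a collar to the filling). Hence Corollary \ref{cor:main_theorem_1_Liouville_fillable} is applicable to $f\alpha$ together with the positive loop $\varphi$, which relative to $f\alpha$ has contact Hamiltonian $h^{f\alpha}$.

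The first step is to invoke Corollary \ref{cor:main_theorem_1_Liouville_fillable} with the contact form $f\alpha$ in place of $\alpha$. This yields immediately
\begin{equation*}
\min\{\wp(f\alpha,\nu_0),\wp(f\alpha,\nu_\varphi)\} \le \| h^{f\alpha}\|_+.
\end{equation*}
The second step is to control $\|h^{f\alpha}\|_+$ in terms of $\|h^{\alpha}\|_+$ and $\max f$ using the transformation rule \eqref{eq:how_h_changes}, namely
\begin{equation*}
h^{f\alpha}_t(x) = f\bigl(\varphi_t^{-1}(x)\bigr)\, h^{\alpha}_t(x).
\end{equation*}
Since both factors on the right are positive, for each fixed $t$ one has $\max_x h^{f\alpha}_t(x) \le (\max f)\cdot \max_x h^{\alpha}_t(x)$, and integrating in $t$ yields $\|h^{f\alpha}\|_+ \le (\max f)\cdot \|h^{\alpha}\|_+$. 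Combining the two steps produces exactly the claimed inequality.

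There is essentially no obstacle here beyond making sure that the two ingredients above compose cleanly. The only point that requires any care is the observation that the Liouville fillability hypothesis transfers from $\alpha$ to $f\alpha$, so that Corollary \ref{cor:main_theorem_1_Liouville_fillable} is indeed applicable to $f\alpha$; once this is accepted, the rest is the same short computation that appears in the proof of Corollary \ref{cor:ugly_systolic_inequality}, with $\varrho^\alpha(\varphi)$ and the ratio $\min f/\max f$ absent because the fillable version of the main theorem does not involve the conformal-factor correction.
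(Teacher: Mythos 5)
Your proposal is correct and follows exactly the route the paper intends: the corollary is stated without proof as a specialisation of Corollary \ref{cor:ugly_systolic_inequality}, obtained by applying Corollary \ref{cor:main_theorem_1_Liouville_fillable} to the rescaled form $f\alpha$ and estimating $\|h^{f\alpha}\|_+ \le (\max f)\,\|h^{\alpha}\|_+$ via \eqref{eq:how_h_changes}. Your remark that Liouville fillability transfers from $\alpha$ to $f\alpha$ (by taking the graph of $f$ in the completion $\widehat W$ as the new contact-type boundary) is precisely the one point worth making explicit, and it is correct.
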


\begin{cor}
Suppose $(\Sigma, \xi)$ admits a contact form $\alpha$ with one-periodic Reeb flow $\theta$. Then for any smooth function $f : \Sigma \to (0 , \infty)$, one has
\begin{equation}
   \min\bigg\{ \frac{\min f}{\max f} \cdot \wp( f\alpha, \nu_0), \wp(f \alpha , \nu_{\theta}) \bigg\} \le  \max f.
\end{equation}
Thus if $\nu_{\theta} = \nu_0$ (for instance, if $\Sigma$ is simply connected), one has 
\begin{equation}
  \wp ( f \alpha ; \nu_0) \le \frac{ (\max f)^2}{\min f}.
\end{equation}
\end{cor}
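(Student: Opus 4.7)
The plan is to apply Corollary \ref{cor:ugly_systolic_inequality} directly to the Reeb flow $\theta$, treated as a positive loop. The hypothesis that the Reeb flow is one-periodic means $\theta = \{\theta_t\}_{t \in S^1}$ is indeed a loop of contactomorphisms starting at the identity, and it is positive since its contact Hamiltonian is everywhere equal to $1$.

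The two key ingredients are computations of the quantities on the right-hand side of Corollary \ref{cor:ugly_systolic_inequality} in this special case. First, the contact Hamiltonian of $\theta$ with respect to $\alpha$ is $h^{\alpha}_t(\theta_t(x)) = \alpha_x(R_{\alpha}(x)) = 1$, so $h^{\alpha} \equiv 1$ and hence $\|h^{\alpha}\|_+ = 1$. Second, because $\theta$ is the flow of $R_{\alpha}$, one has $\theta_t^* \alpha = \alpha$ for every $t$, so the conformal factor satisfies $\rho^{\alpha}_t \equiv 1$ and therefore $\varrho^{\alpha}(\theta) = 1$ by \eqref{eq:pi_alpha_phi}. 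Plugging these values into Corollary \ref{cor:ugly_systolic_inequality} with $\varphi = \theta$ yields
\begin{equation*}
  \min\!\bigg\{ \frac{\min f}{\max f}\cdot \wp(f\alpha, \nu_0),\; \wp(f\alpha, \nu_{\theta}) \bigg\} \le \max f,
\end{equation*}
which is the first inequality.

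For the second statement, assume $\nu_{\theta} = \nu_0$. Then the two terms inside the minimum become $\tfrac{\min f}{\max f}\,\wp(f\alpha, \nu_0)$ and $\wp(f\alpha, \nu_0)$; since $\min f \le \max f$ the first is no larger than the second, so the minimum equals the first term. Rearranging
\begin{equation*}
  \frac{\min f}{\max f}\,\wp(f\alpha, \nu_0) \le \max f
\end{equation*}
gives $\wp(f\alpha; \nu_0) \le (\max f)^2/\min f$, as required.

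There is no real obstacle here: the corollary is a direct specialisation of Corollary \ref{cor:ugly_systolic_inequality}. The only point that requires a moment's care is verifying that both $\|h^{\alpha}\|_+ = 1$ and $\varrho^{\alpha}(\theta) = 1$ when $\varphi$ is taken to be the Reeb flow itself, after which the inequalities are purely algebraic.
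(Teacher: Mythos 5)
Your proposal is correct and is exactly the argument the paper intends: the corollary is stated without proof precisely because it is the immediate specialisation of Corollary \ref{cor:ugly_systolic_inequality} to $\varphi = \theta$, using $h^{\alpha} \equiv 1$ and $\varrho^{\alpha}(\theta) = 1$. Your verification of these two facts and the algebraic rearrangement in the case $\nu_{\theta} = \nu_0$ are both accurate.
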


Inspired by systolic inequalities in Riemannian geometry, there have been attempts to prove a relationship between the minimal Reeb period $\wp(\alpha)$ and the contact volume $\text{vol}(\alpha) := \int_{\Sigma} \alpha \wedge d \alpha^{n-1}$. This corresponds to using the $L^n$-norm of the function $f$ instead of the $L^{\infty}$-norm: 
\begin{equation}
  \text{vol}(f \alpha) = \int_{\Sigma} (f \alpha) \wedge d( f \alpha)^{n-1 } = \int_{\Sigma} f^n \alpha \wedge d \alpha^{n-1}.
\end{equation}
Nevertheless, recent work by Abbondandolo-Bramham-Hryniewicz-Salom\~ao \cite[Theorem 2]{AbbondandoloBramhamHryniewiczSalomao2015} shows that for $S^3$  equipped with the standard contact structure no such $L^2$-contact systolic inequality holds. However in \cite[Theorem 1]{AbbondandoloBramhamHryniewiczSalomao2015} it is proved that a $L^2$-contact systolic inequality holds in a $C^3$-neighbourhood of the Zoll contact forms on $S^3$, which in particular includes the standard contact form. An earlier result in this direction is contained in \cite{AlvarezPaivaBalacheff2014}. 

We prove all of the above contact systolic inequalities under the assumption of the existence of a positive loop. An unconditional systolic inequality would immediately imply the Weinstein Conjecture. Thus it seems to us that such a conditional statement is the best we can reach with present technology.

\subsection*{Acknowledgements}
PA is supported by the SFB 878 - Groups, Geometry and Actions and SFB/TRR 191 - Symplectic Structures in Geometry, Algebra and Dynamics. UF is supported by the SNF fellowship 155099 and a fellowship at Institut Mittag-Leffler. We thank Alberto Abbondandolo, Leonid Polterovich, Sheila Sandon, and Egor Shelukhin for helpful comments.

\section{Preliminaries} % (fold)
\label{sec:preliminaries}

Let $(\Sigma,\xi)$ be a closed connected cooriented contact manifold. We fix once and for all a supporting contact form $\alpha$, i.e.~$\alpha \in \mathcal{C}(\Sigma, \xi)$. In contrast to the Introduction, from now on we will drop the superscript $\alpha$ in our notation. We will prove Theorem \ref{thm:thm1_intro} for this fixed choice of $\alpha$ as Theorem \ref{thm:theorem_1_in_the_paper}.

Let $\varphi = \{ \varphi_t \}_{t \in [0,1]}$ denote a path of contactomorphisms with $\varphi_0 = \mathrm{id}$.   The \emph{contact Hamiltonian} of $\varphi$ with respect to $\alpha$ is the function $h : \Sigma \times [0,1] \to \R$ given by 
\begin{equation}
  h_t(\varphi_t(x) ) = \alpha \left( \frac{d}{dt}\varphi_t(x) \right) 
\end{equation}
The conformal factor of $\varphi$ with respect to $\alpha$ is the positive function 
\begin{equation}
  \rho : \Sigma \times [0,1] \to (0 , \infty)
\end{equation}
implicitly defined by the requirement that $\varphi_t^* \alpha = \rho_t \alpha$. 
We define 
\begin{equation}
\label{eq:inf_rho}
\varrho = \varrho(\varphi)= \varrho(h) := \min_{(x ,t) \in \Sigma \times [0,1]} \rho_t(x), \qquad \Pi = \Pi(\varphi) := \max_{(x ,t) \in \Sigma \times [0,1]} \rho_t(x)
\end{equation}
Note that since $\varphi_0 = \mathrm{id}$, one has $\rho_0 \equiv 1$ and hence $\varrho \le 1 \le \Pi$.

In all of the following we will implicitly assume that the contact Hamiltonian $h_t$ is 1-periodic in $t$. This can be achieved by reparametrising the path $\varphi_t$ in time.  We will need the following easy lemma.
\begin{lem}
\label{lem:inverse}
If we denote by $\bar{h}_t$ the contact Hamiltonian and by $ \bar{\rho}_t$ the conformal factor of the path $t \mapsto \varphi_t^{-1}$, one has
\begin{equation} \begin{aligned}
\bar{\rho}_t(x) &= \frac{1}{\rho_t(\varphi^{-1}_t(x))}, \\  
\bar{h}_t(x) & = - \frac{h_t(\varphi_t(x))}{\rho_t(x)}.
  \end{aligned}\end{equation} 
\end{lem}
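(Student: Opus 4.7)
The plan is to derive both formulas from the defining identity $\varphi_t \circ \varphi_t^{-1} = \id$ together with $\varphi_t^*\alpha = \rho_t \alpha$. Write $\psi_t := \varphi_t^{-1}$ and let $X_t, Y_t$ denote the generating vector fields of $\varphi_t$ and $\psi_t$ respectively.

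For the conformal factor, I would simply pull back $\varphi_t^*\alpha = \rho_t\alpha$ by $\psi_t$. Since $(\varphi_t \circ \psi_t)^* \alpha = \alpha$, one obtains
\begin{equation*}
\alpha = \psi_t^*(\rho_t \alpha) = (\rho_t \circ \psi_t)\, \psi_t^*\alpha = (\rho_t \circ \psi_t)\, \bar{\rho}_t \alpha,
\end{equation*}
which immediately gives $\bar{\rho}_t(x) = 1/\rho_t(\psi_t(x))$.

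For the contact Hamiltonian, differentiate $\varphi_t(\psi_t(x)) = x$ in $t$ by the chain rule: this yields
\begin{equation*}
X_t(x) + (d\varphi_t)_{\psi_t(x)}\!\left(\tfrac{d}{dt}\psi_t(x)\right) = 0,
\end{equation*}
so $\tfrac{d}{dt}\psi_t(x) = -(d\psi_t)_x(X_t(x))$. Then
\begin{equation*}
\bar{h}_t(\psi_t(x)) = \alpha_{\psi_t(x)}\!\left(\tfrac{d}{dt}\psi_t(x)\right) = -(\psi_t^*\alpha)_x(X_t(x)) = -\bar{\rho}_t(x)\,\alpha_x(X_t(x)) = -\bar{\rho}_t(x)\,h_t(x),
\end{equation*}
using that $h_t(x) = h_t(\varphi_t(\psi_t(x))) = \alpha_x(X_t(x))$ from the definition of the contact Hamiltonian. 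Finally, replacing $x$ by $\varphi_t(x)$ in the resulting identity and using $\psi_t(\varphi_t(x)) = x$ converts this into the claimed formula
\begin{equation*}
\bar{h}_t(x) = -\bar{\rho}_t(\varphi_t(x))\,h_t(\varphi_t(x)) = -\frac{h_t(\varphi_t(x))}{\rho_t(x)}.
\end{equation*}

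The entire argument is a bookkeeping exercise; no step is really an obstacle. The only point where one has to be careful is keeping track of the base points, since $\varphi_t$ sends the base point of $\psi_t(x)$ back to $x$ and it is easy to interchange $\rho_t$ with $\rho_t \circ \varphi_t$ or $\rho_t \circ \psi_t$. To avoid this I would consistently evaluate the defining formulas at the base point produced by the flow in question, only translating at the very end via $x \mapsto \varphi_t(x)$.
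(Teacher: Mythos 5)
Your proof is correct and follows essentially the same route as the paper: pull back $\varphi_t^*\alpha=\rho_t\alpha$ by $\varphi_t^{-1}$ for the conformal factor, and differentiate the identity $\varphi_t\circ\varphi_t^{-1}=\id$ (the paper uses $\varphi_t^{-1}\circ\varphi_t=\id$, an immaterial difference) and apply $\alpha$ for the Hamiltonian. The base-point bookkeeping, which is the only delicate point, is handled correctly.
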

\begin{proof}
We apply $(\varphi_t^{-1})^*$ to the equation $\varphi_t^*\alpha = \rho_t \alpha$: 
\begin{equation}
 \alpha = (\varphi_t^{-1})^* \varphi_t^* \alpha = (\varphi_t^{-1})^*(\rho_t \alpha) = (\rho_t \circ \varphi_t^{-1}) (\varphi_t^{-1})^* \alpha ,
 \end{equation}
 and thus 
 \begin{equation}
 (\varphi_t^{-1})^* \alpha = \frac{1}{\rho_t \circ \varphi_t^{-1}} = \bar{\rho}_t \alpha.
 \end{equation}
 Set $X_t(\varphi_t(x)) : = \frac{d}{dt}\varphi_t(x)$ and similarly $\bar{X}_t(\varphi_t^{-1}(x)) = \frac{d}{dt}\varphi_t^{-1}(x)$. To compute $\bar{h}_t$ we begin with 
\begin{equation}
0 = \frac{d}{dt}( \varphi_t^{-1}\circ \varphi_t) (x) 
  = \bar{X}_t(x) + D \varphi_t^{-1} (X_t(\varphi_t(x)))
  \end{equation}
Now  applying $\alpha$ to both sides we obtain
\begin{equation} \begin{aligned}
\alpha_x (\bar{X}_t(x)) & = - \alpha_{\varphi_t(x)} \bigg( D \varphi_t^{-1} \Big(X_t\big(\varphi_t(x)\big)\Big)\bigg)  \\
& = - \bar{\rho}_t\big( \varphi_t(x)\big) \alpha_{\varphi_t(x)} \big(X_t(\varphi_t(x)) \big)\\
& =  - \frac{1}{\rho_t(x)} h_t( \varphi_t(x)) \\
& \stackrel{\mathrm{def}}{=}\bar{h}_t(x)
\end{aligned}\end{equation} 
\end{proof} 

\begin{defn}\label{defn:symplectisation}
The symplectisation of $(\Sigma,\alpha)$ is the manifold $S \Sigma := \Sigma \times (0, \infty)$, equipped with the symplectic form $\omega = d(r \alpha)$, where $r$ is the coordinate on $(0,\infty)$. Where convenient, we will use the letter $z$ to denote a point $(x,r) \in S \Sigma$ and $\pi_{(0,\infty)}$ to denote the projection $S\Sigma=\Sigma\times (0,\infty)\rightarrow (0,\infty)$ onto the second factor. \\
In some of the statements that follow we assume $\Sigma$ is Liouville fillable. In this case we always denote by $(W , \omega = d \lambda)$ the filling, so that $\alpha = \lambda|_{\Sigma}$. We denote by $\widehat W:=W\cup\big(\Sigma\times[1,\infty)\big)$ the completion of $W$. Note that $ S \Sigma $ embeds inside $\widehat W$. \\
For an arbitrary symplectic manifold $(V,\Omega)$ and a smooth function $H:V\rightarrow \R$, we denote by $X_H$ the Hamiltonian 
vector field on $V$ associated to $H$, defined by the relation $-dH=\Omega(X_H,\cdot)$. If $H$ depends on additional parameters, we obtain by the same procedure a parameter-dependent Hamiltonian vector field on $V$ also denoted by $X_H$.
\end{defn}

\begin{defn}\label{defn:ham_diffeo}
We denote by $H_t : S \Sigma \to \R$ the function 
\begin{equation}
H_t(x,r) = r h_t(x).
\end{equation}
This is the Hamiltonian of the symplectomorphism $\phi_t : S \Sigma \to S \Sigma$ given by 
\begin{equation}
\phi_t(x,r) = \left( \varphi_t(x), \frac{r}{\rho_t(x)} \right) .
\end{equation} 
Similarly we denote by 
\begin{equation}\label{eq:phip}
\bar{H}_t(x,r) := r \bar{h}_t(x) = -\frac{r}{\rho_t(x)}h_t(\varphi_t(x)),
\end{equation}
which is the Hamiltonian of the symplectomorphism $\phi_t^{-1}$, cf. Lemma \ref{lem:inverse}.
\end{defn}

Our aim now is to modify the Hamiltonian function $\bar{H}_t$ to a related function $\mathcal{H}_t$ by using a collection of cutoff functions. We introduce these functions now. In fact, the functions $m$ and $\chi$ defined below won't actually be used until Definition \ref{defn:rab_functions}, but for the purposes of clarity we collect all the cutoff functions into one place here. 
For some discussion on their significance we refer to remark \ref{rem:cutoffs}.
\begin{defn}[The various cutoff functions]
\label{defn:cutoff_functions}
Fix a (small) constant $\delta >0$, which later on will be specified in the proof of Theorem \ref{thm:technical_theorem_about_paths}. 
\begin{enumerate}
  \item \textbf{The function $m_\delta$:}
  Fix a smooth function  $m_\delta : (0,\infty) \to \R$ such that
\begin{equation}
m_\delta(r)=
\begin{cases}
r- 1 , & r\in [e^{-\delta} , e^{\delta}],\\
e^{2 \delta} - 1, & r\in [ e^{2 \delta}   ,\infty),\\
e^{ - 2 \delta} - 1, & r\in(0, e^{- 2 \delta} ],
\end{cases}\qquad \text{and} \qquad  m'_\delta(r)\geq0.
\end{equation}
Note that $m'_\delta(1)=1$ and $m_\delta^{-1}(0)=\{1\}$.
\item \textbf{The function $\beta_\delta$:}
Fix a smooth function  $\beta_\delta : (0,\infty) \to [0,1]$ such that
\begin{equation}
  \beta_\delta(r) = 
  \begin{cases}
  1, & r \in [e^{ - \delta}, e^{\delta}], \\
  0, & r \notin [e^{ -  2 \delta}, e^{ 2 \delta}].
  \end{cases}
  \end{equation}
\item \textbf{The function $\chi$:}
Fix a smooth function $\chi : S^1 \to \R$ such that
\begin{equation}
  \chi(t) = 0, \qquad \forall \, t \in [\tfrac12,1] \subset S^1, \qquad \text{and} \qquad \int_{S^1}\chi (t) \,dt = 1.
\end{equation}
\item \textbf{The function $\kappa$:}
Fix a smooth function $\kappa : [0,1] \to [0,1]$ such that
\begin{equation}
\dot  \kappa(t) \ge 0, \qquad \text{and} \qquad  \kappa(\tfrac12) = 0,  \qquad \text{and} \qquad \kappa(1) = 1.
\end{equation}

\item \textbf{The functions $f_{\infty}^{\pm}$ and $f_{\sigma}$:} 

We fix two smooth functions $f^\pm_\infty\in C^\infty(\R,[0,1])$ satisfying
\begin{enumerate}
\item $f^+_\infty(s)=0$ for $s\leq-1$, $f^+_\infty(s)=1$ for $s\geq0$ and $f^+_\infty$ is monotone increasing,
\item $f^-_\infty(s)=1$ for $s\leq0$, $f^-_\infty(s)=0$ for $s\geq1$ and $f^-_\infty$ is monotone decreasing.
\end{enumerate}
For $\sigma\geq 0$, we fix a smooth family of functions $f_\sigma\in C^\infty(\R,[0,1])$ satisfying
\begin{enumerate}
\setcounter{enumi}{2}
\item $f_\sigma(s)=0$ for $s\leq-\sigma-1$ and $s\geq \sigma+1$ 
\item $0\leq f'_\sigma(s)\leq2$ on $(-\sigma-1,-\sigma)$ and $0\geq f'_\sigma(s)\geq-2$ on $(\sigma,\,\sigma+1)$,
\item for $\sigma\geq1$: $f_\sigma(s)=1$ for $s\in[-\sigma,\sigma]$,
\item for $\sigma\leq1$: $f_\sigma'(s)=0$ for $s\in[-\sigma,\sigma]$, $\lim_{\sigma\to0}f_\sigma=0$ in the strong $C^\infty$-topology,
\item\label{limit_of_betas} $\lim_{\sigma\to\infty}f_\sigma(s-\sigma)=f_\infty^+(s)$ and $\lim_{\sigma\to\infty}f_\sigma(s+\sigma)=f_\infty^-(s)$ in the $C^\infty_{\mathrm{loc}}$-topology. 
\end{enumerate} 
These functions are illustrated in Figure \ref{pic:beta}.
\end{enumerate}

\begin{rem}
The functions $\beta_\delta$, $f_\infty^\pm$, and $f_\sigma$ are the real heroes.
\end{rem}

\begin{figure}[ht] 
\def\svgwidth{80ex} 
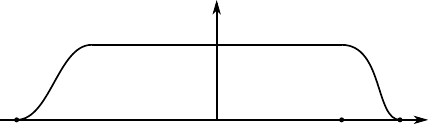\\[3ex]
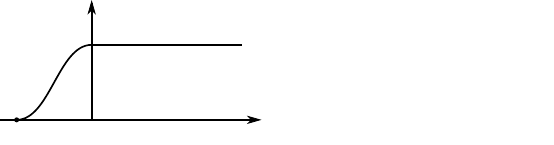
\caption{The functions $f_\sigma$ and $f^\pm_\infty$.}\label{pic:beta}
\end{figure}
\end{defn}

Recall the definition of the projection $\pi_{(0,\infty)}:S\Sigma\rightarrow (0,\infty)$ from Definition \ref{defn:symplectisation}.

\begin{defn}
\label{defn:mathcalH}
We define a new function $\mathcal{H}_t : S \Sigma \to \R$:
\begin{align}
\mathcal{H}_t(x,r) & = \dot \kappa(t) \beta_\delta (\pi_{(0,\infty)}(\phi_{\kappa(t)}(x,r))) \bar{H}_{\kappa(t)}(x,r) \label{eq:def_of_K} \\
& = - \dot \kappa(t) \beta_\delta \left( \frac{r}{\rho_{\kappa(t)}(x)} \right) \frac{r}{\rho_{\kappa(t)}(x) } h_{\kappa(t)}(\varphi_{\kappa(t)}(x)). \nonumber
\end{align}
Finally we incorporate the family of cutoff functions $f_{\sigma}$ and $f_{\infty}^{\pm}$ and set
\begin{equation}
\label{eq:K_with_f}
  \mathcal{H}^{\sigma}_{s,t}(x,r) : =  f_{\sigma}(s)\mathcal{H}_t(x,r)  , 
\end{equation}
and
\begin{equation}
\label{K_with_f_pm}
\mathcal{H}^{\pm}_{s,t}(x,r) : =  f_{\infty}^{\pm}(s)\mathcal{H}_t(x,r)
\end{equation}
\end{defn}

\begin{rem}
\label{rem:motivating_K}
We point out that all Hamiltonian functions are compactly supported inside the symplectisation, for instance note that for each $t \in [0,1]$,
\begin{equation}
\label{eq:supp}
  \mathrm{supp}(\mathcal{H}_t)  \subset \left\{ (x,r) \in S \Sigma \, \bigg|\, \frac{r}{\rho_{\kappa(t)}(x)}  \in [e^{- 2 \delta}, e^{2 \delta}] \right\} \subset  \Sigma \times [ \varrho e^{-2 \delta}, \Pi e^{2 \delta}],
\end{equation}
where $\varrho, \Pi$ were defined in \eqref{eq:inf_rho}.

The definition of $\mathcal{H}_t$ may appear somewhat obscure, so let us unwrap it slightly.
Recall first from Definition \ref{defn:ham_diffeo} that the Hamiltonian $\bar{H}_t$ generates $\phi_t^{-1}$, while $\phi_t$ is generated by $H_t$. 
Considering the function $\hat{\mathcal{H}}_t(z) = \beta_\delta(\pi_{(0,\infty)}(\phi_t(z)))\bar{H}_t(z)$ for $z=(x,r)\in S\Sigma$ is a trick
 we learnt from the paper \cite[Lemma 41]{Shelukhin2014}. The key observation is that if a point $z\in S\Sigma$ lies in the interior of the region
 where $\beta_\delta$ is equal to $1$ then the associated Hamiltonian flows (cf. Definition \ref{defn:ham_diffeo}) of $\hat{\mathcal{H}}$ and $\bar{H}$ agree along the orbit starting at
 $z$. 
 
Indeed, $\hat{\mathcal{H}}_t(\phi_t^{-1}(z))=\bar{H}_t(\phi_t^{-1}(z))$ for any $z=(x,r)\in \Sigma\times (e^{-\delta},e^\delta)$ since $\beta_\delta(\phi_t(\phi_t^{-1}(z)))=1$; thus the associated Hamiltonian vector fields (cf. Definition \ref{defn:symplectisation}) coincide on a neighborhood of $\big\{(\phi_t^{-1}(y,1),t)\in S\Sigma\times [0,1]\mid y\in \Sigma\big\}$.

Therefore the flow $\Phi_t^{\hat{\mathcal{H}}}$ of $\hat{\mathcal{H}}$ coincides, when applied to a neighborhood of $\Sigma\times \{1\}$, with the flow $\phi^{-1}_t$ of $\bar{H}_t$ by Lemma \ref{lem:inverse}. Since
the function $\mathcal{H}$ defined above differs from $\hat{\mathcal{H}}$ only a time reparametrisation function $\kappa$, we find that the Hamiltonian flow $\Phi_t^{\mathcal{H}}$ of $\mathcal{H}$ satisfies 
$\Phi_1^{\mathcal{H}}(y,1)=\phi_1^{-1}(y,1)$ for all $(y,1)\in S\Sigma$.
As in \cite{Shelukhin2014}, this leads to an improved action estimate over our previous paper \cite{AlbersFuchsMerry2015}.

If $\Sigma$ is Liouville fillable then we extend all the above Hamiltonian functions by constants over $\widehat W \setminus S \Sigma$. This is possible since the Hamiltonian functions are all compactly supported inside the symplectisation.
\end{rem}

\begin{defn}
\label{defn:pm_norma}
Given a compactly supported function $F: Q \times [0,1] \to \R$ on a manifold $Q$, we denote by 
\begin{equation}
  \| F \|_+ := \int_0^1 \max_{q\in Q } F_t(q) \,dt. \qquad \| F\|_- := \int_0^1 \min_{q \in Q} F_t(q)\,dt,
\end{equation}
Finally we set
\begin{equation}
  \| F \|_{\mathrm{osc}} := \| F\|_+ - \|F\|_-.
\end{equation}
We note that the quantity $ \| F\|_{\mathrm{osc}}$ is always non-negative
while $ \|F\|_{\pm}$ is non-negative for non-negative functions $F$.
\end{defn}

Recall from Definition \ref{defn:mathcalH} the construction of the Hamiltonian function $\mathcal{H}_t$ on $S\Sigma$ (resp. on $\widehat{W}$) from a contact Hamiltonian $h_t$ on $\Sigma$ and a cutoff function $\beta_\delta$ depending implicitly on $\delta$. In the following we always assume that $h_t(x)\geq0$ and thus $\mathcal{H}_t(x,r)\leq 0$ for all $(x,r)\in S\Sigma$ and $t\in[0,1]$. This is mostly for convenience since many results generalize appropriately. The following lemma is crucial in all what follows. 
\begin{lem}
\label{lem:Kosc_and_h}
Suppose $h_t(x) \ge 0$ for all $x \in \Sigma$ and $t \in [0,1]$. Then one has the estimate
\begin{equation}
   \| \mathcal{H} \|_{\mathrm{osc}} \le e^{2 \delta} \| h \|_+.
\end{equation}
\end{lem}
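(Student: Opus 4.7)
The plan is to read off the sign of $\mathcal{H}$ from its explicit formula, use the cutoff $\beta$ to convert the factor $r/\rho$ into a universal constant, and then perform a change of variables in $t$ via the reparametrisation $\kappa$. There is no real obstacle here; the statement is essentially a bookkeeping exercise, and the appearance of the constant $e^{2\delta}$ comes directly from the support of $\beta$.

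\textbf{Step 1 (sign of $\mathcal{H}$).} From the explicit formula
\begin{equation*}
  \mathcal{H}_t(x,r) = -\dot\kappa(t)\,\beta\!\left(\frac{r}{\rho_{\kappa(t)}(x)}\right)\frac{r}{\rho_{\kappa(t)}(x)}\,h_{\kappa(t)}\!\big(\varphi_{\kappa(t)}(x)\big),
\end{equation*}
and the hypotheses $h\ge 0$, $\beta\ge 0$, $\dot\kappa\ge 0$, $r>0$, $\rho>0$, we have $\mathcal{H}_t\le 0$ pointwise. Moreover $\beta$ is compactly supported, so $\mathcal{H}_t$ vanishes outside $\{\,r/\rho_{\kappa(t)}(x)\in[e^{-2\delta},e^{2\delta}]\,\}$; in particular $\max_{(x,r)\in S\Sigma}\mathcal{H}_t(x,r)=0$, and hence $\|\mathcal{H}\|_+=0$.

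\textbf{Step 2 (pointwise bound).} Since $0\le\beta\le 1$ and $\beta(s)=0$ for $s>e^{2\delta}$, one has $\beta(s)\cdot s\le e^{2\delta}$ for every $s>0$. Applying this with $s=r/\rho_{\kappa(t)}(x)$ gives
\begin{equation*}
  -\mathcal{H}_t(x,r)\;\le\; e^{2\delta}\,\dot\kappa(t)\,h_{\kappa(t)}\!\big(\varphi_{\kappa(t)}(x)\big)\;\le\; e^{2\delta}\,\dot\kappa(t)\,\max_{y\in\Sigma}h_{\kappa(t)}(y).
\end{equation*}

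\textbf{Step 3 (integrate and change variables).} Integrating the bound in Step 2 in $t$ and using that $\kappa:[0,1]\to[0,1]$ is weakly monotone with $\kappa(1/2)=0$ and $\kappa(1)=1$, the substitution $u=\kappa(t)$ yields
\begin{equation*}
  \|\mathcal{H}\|_-\;=\;-\int_0^1\min_{(x,r)\in S\Sigma}\mathcal{H}_t(x,r)\,dt\;\le\;e^{2\delta}\int_0^1\dot\kappa(t)\,\max_{y\in\Sigma}h_{\kappa(t)}(y)\,dt\;=\;e^{2\delta}\,\|h\|_+.
\end{equation*}
Combined with $\|\mathcal{H}\|_+=0$ from Step 1, this gives $\|\mathcal{H}\|_{\mathrm{osc}}=\|\mathcal{H}\|_++\|\mathcal{H}\|_-\le e^{2\delta}\|h\|_+$, which is the claim. (Note: the contribution from the extension by constants to $\widehat W\setminus S\Sigma$ in the Liouville fillable case is irrelevant, since the extending constants coincide with $\mathcal{H}_t=0$ on the boundary of the support of $\beta(r/\rho)$.)
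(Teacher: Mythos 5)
Your proof is correct and follows essentially the same route as the paper's: both establish $\|\mathcal{H}\|_+=0$ from the sign of $\mathcal{H}$, bound the factor $\beta(r/\rho)\cdot r/\rho$ by $e^{2\delta}$ using the support of $\beta$, and remove the reparametrisation $\kappa$ by the substitution $u=\kappa(t)$ to obtain $e^{2\delta}\|h\|_+$. Your version merely makes the pointwise bound $\beta(s)\,s\le e^{2\delta}$ and the change of variables explicit.
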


\begin{proof}
We recall the support of $\mathcal{H}$ from \eqref{eq:supp}. Moreover we have
$ \| \mathcal{H} \|_+ = 0$ since $\mathcal{H}_t \le 0$ and $\mathcal{H}_t$ is compactly supported in $S\Sigma$. We now estimate 
$\| \mathcal{H} \|_-$:
\begin{equation} \begin{aligned}
-\| \mathcal{H} \|_- & = - \int_0^1 \min_{(x,r) \in S \Sigma} \mathcal{H}_t(x,r)\,dt \\
& = - \int_0^1 \min_{(x,r) \in S \Sigma}  \left( -\dot \kappa(t) \beta_\delta \left( \frac{r}{\rho_{\kappa(t)}(x)} \right) \frac{r}{\rho_{\kappa(t)}(x) } h_{\kappa(t)}(\varphi_{\kappa(t)}(x)) \right) \,dt \\
& = \int_0^1 \max_{(x,r) \in S \Sigma}  \left( \dot \kappa(t) \beta_\delta \left( \frac{r}{\rho_{\kappa(t)}(x)} \right) \frac{r}{\rho_{\kappa(t)}(x) } h_{\kappa(t)}(\varphi_{\kappa(t)}(x)) \right) \,dt \\
& \stackrel{(\mathrm{a})}{=} \int_0^1 \max_{(x,r) \in S \Sigma}  \left(  \beta_\delta \left( \frac{r}{\rho_{t}(x)} \right) \frac{r}{\rho_{t}(x) } h_{t}(\varphi_{t}(x)) \right) \,dt \\
& \stackrel{(\mathrm{b})}{\le} \int_0^1 \max \left\{   \frac{r}{\rho_{t}(x) } h_{t}(\varphi_{t}(x)) \;  \bigg| \; \frac{r}{\rho_t(x)} \in [e^{-2 \delta }, e^{2 \delta}] \right\}     \,dt \\
& \le e^{2 \delta}\int_0^1  \max_{x \in \Sigma} h_t(\varphi_t(x))\,dt \\
& = e^{2 \delta}\| h \|_+,
\end{aligned}\end{equation} 
where in (a) we  removed the time reparametrisation function $\kappa$, and in (b) 
we used the definition of $\beta_\delta$ and the fact that $\rho_t$ and $h_t$
 are non-negative.
\end{proof} 

We abbreviate $\Lambda(S \Sigma) := C^{\infty}_{\mathrm{contr}}(S^1,  S \Sigma)$ the space of smooth contractible loops $z = (x,r) :S^1 \to S \Sigma$ endowed with the Whitney $C^\infty$-topology.
 \begin{defn}
 \label{defn:rab_functions}
  We define four different Rabinowitz action functionals: 
\begin{equation} \begin{aligned}
\mathcal{A}_0 & :  \Lambda(S \Sigma) \times \R \to \R \\
\mathcal{A}_{ \mathcal{H}} & :  \Lambda(S \Sigma) \times \R \to \R \\
  \mathcal{A}^{\sigma}_s&  : \Lambda(S \Sigma) \times \R \to \R \\  \mathcal{A}^{\pm}_s & :  \Lambda(S \Sigma) \times \R \to \R.
\end{aligned}
\end{equation} 
The simplest one $\mathcal{A}_0$ is given by
\begin{equation}
\label{eq:A_varpi}
  \mathcal{A}_0 ( x, r, \eta) := \int_{S^1}r \alpha( \dot  x) - \eta \int_{S^1} \chi(t) m_\delta\big(r(t)\big)\,dt ,
\end{equation}
The other three are obtained from $\mathcal{A}_0$ as follows:
\begin{equation}
\label{eq:A_K}
  \mathcal{A}_{\mathcal{H}}( x, r, \eta) :=  \mathcal{A}_0(x,r, \eta) - \int_{S^1} \mathcal{H}_t \big(x(t),r(t) \big) \,dt,
\end{equation}
and 
\begin{equation}
\label{eq:A_sigma_s}
 \mathcal{A}^{\sigma}_s ( x, r, \eta) := \mathcal{A}_0(x, r ,\eta) - \int_{S^1} \mathcal{H}^{\sigma}_{s,t}\big(x(t),r(t)\big)\,dt,
\end{equation}
and finally 
\begin{equation}
\label{eq:A_pm}
  \mathcal{A}^{ \pm}_s( x, r, \eta) := \mathcal{A}_0(x,r, \eta)- \int_{S^1} \mathcal{H}^{\pm}_{s,t} \big(x(t),r(t) \big) \,dt.
\end{equation}
\end{defn}
Note  from Figure \ref{pic:beta} we have 
\begin{equation}
\label{eq:asymptotic_A_sigma}
  \mathcal{A}^{\sigma}_{s} = \mathcal{A}_0, \qquad \forall \, |s| \ge \sigma +1 . 
\end{equation}
and similarly 
\begin{align}
  \mathcal{A}^+_s = \mathcal{A}_0,  \qquad \forall \, s \le -1,& \qquad
    \mathcal{A}^+_s = \mathcal{A}_{\mathcal{H}}, \qquad \forall \, s \ge 0, \label{eq:asymptotic_A_+}\\
      \mathcal{A}^-_s = \mathcal{A}_{\mathcal{H}}, \qquad \forall \, s \le 0,& \qquad
        \mathcal{A}^-_s = \mathcal{A}_0,  \qquad \forall \, s \ge 1, \label{eq:asymptotic_A_-}
\end{align}

\begin{rem}\label{rem:loopspace_notation}
If $\Sigma$ has Liouville filling $W$ then we do not work on the symplectisation $S\Sigma$ but instead on the completion $\widehat W$. As mentioned above all Hamiltonian functions smoothly extend over $W$ by constants. Then the above Rabinowitz action functionals are defined on $ \Lambda(\widehat W) \times \R$ instead of $ \Lambda(S \Sigma) \times \R$. In the remainder of this section when the distinction is immaterial we will use the notation $\Lambda$ to indicate either $\Lambda(S \Sigma)$ or $\Lambda ( \widehat W)$.
\end{rem}

\begin{rem}\label{rem:cutoffs}
The second integrand in the functional $\mathcal{A}_0$ is modification of the $m(r)=r$ which is the Hamiltonian function which gives rise to level-wise Reeb flow on $S\Sigma$. The function $m_\delta$ is a cut off version of $m$ which still gives rise to level-wise Reeb flow near $\Sigma\times \{1\}$. The function $\chi$ is only really relevant in (\ref{eq:A_K}), where it ensures that the vector field $X_{\chi m_\delta}$ vanishes for $t\in [0,\frac{1}{2}]$, while $\kappa$ ensures the vanishing of $X_{\mathcal{H}^\sigma_{s,t}}$ for $t\in [\frac{1}{2},1]$. Together with the properties $m'_\delta(1)=1$ and $m_\delta^{-1}(0)=1$, this allows a concrete description of the critical points of the functionals in Lemma \ref{lem:critial_points_of_functionals}. One can think of $\eta$ as a Lagrange multipler.

Recall that $\mathcal{H}_{s,t}^\sigma:= f_\sigma \mathcal{H}_t$ where $f_0=0$ and $f_\sigma$ are compactly supported functions converging in $C^\infty_{loc}$ to the constant function $1$ as $\sigma\rightarrow \infty$. These functions yield interpolations $\mathcal{A}_s^\sigma$ between the unperturbed functional $\mathcal{A}_0$ and the perturbed functional $\mathcal{A}_\mathcal{H}$. A continuation map argument allows then to show the existence of suitable critical points of $\mathcal{A}_\mathcal{H}$ from critical points of $\mathcal{A}_0$. The function $\beta_\delta$ ensures (together with $m_\delta$) that for each flow line $(u,\eta)$ occuring in the continuation argument, the map $u$ is holomorphic wherever it maps into $\Sigma\times (0,e^{-\varepsilon})$. This in turn allows to use methods from SFT compactness to show that  the Hofer energy of these curves is bounded above by $e^{\varepsilon} \E(u,\eta)<\wp(\alpha,\nu_0)$ and thus they cannot break in the concave end of $S\Sigma$.
\end{rem}

More information on these functionals and their uses is contained in the survey article \cite{AlbersFrauenfelder2012a}. The following well-known lemma computes the critical points of $\mathcal{A}_0$ and $\mathcal{A}_{\mathcal{H}}$.

\begin{lem}
\label{lem:critial_points_of_functionals} $ $
\begin{enumerate}
  \item  Critical points of $\mathcal{A}_0$ are (up to reparametrisation) Reeb orbits and constant loops in $\Sigma$. More precisely, if $(x,r, \eta)$ is a critical point of $\mathcal{A}_0$ then $r(t) =1$ and $x(t) = \theta_{ \eta t}(x(0))$ is either a closed Reeb orbit (if $\eta >0$) or a closed Reeb orbit traversed backwards (if $\eta <0$) or a constant loop based at the point $x(0) \in \Sigma$ (if $\eta = 0$). In all three cases the critical value $\mathcal{A}_0(x,r,\eta) = \eta$, which is also the action of $\alpha$ on $x$. 
  \item Critical points of $\mathcal{A}_\mathcal{H}$  are translated points of $\varphi_1^{-1}$  in the sense of Sandon \cite{Sandon2013}. If  $(x,r, \eta)$ is a critical point of $\mathcal{A}_{\mathcal{H}}$ then $p := x(\tfrac12)$ is a translated point of $\varphi_1^{-1}$ with time-shift $-\eta$, that is \begin{equation}\label{eqn:translated_point}
   \theta_{-\eta}(p) = \varphi_1^{-1}(p), \qquad \bar{\rho}_1(p) = 1.
 \end{equation} 
 More precisely, up to reparametrisation $x(t)$ follows first the flow $\theta_{\eta t}$ from $x(0)$ to $p=x(\tfrac12)=\theta_\eta(x(0))$ and then the flow $\varphi_t^{-1}$ from $p$ to $x(1)=\varphi_t^{-1}(\theta_{ \eta }(x(0)))$. Thus, $x(1)=x(0)$ implies  \eqref{eqn:translated_point}.
 
Moreover $\mathcal{A}_{ \mathcal{H}}(x,r, \eta) =\eta$. In particular, whilst the function $\mathcal{H}$ depends on the choice of $\delta >0$ that occurs in the cutoff functions $\beta_\delta$ and $m_\delta$ in Definition \ref{defn:cutoff_functions}, the critical points and critical values of $\mathcal{A}_{\mathcal{H}}$ do not.
\end{enumerate}
\end{lem}
We will refer to critical points of $\mathcal{A}_0$ with $\eta = 0$ as \emph{constant critical points}.

\begin{proof}
The proof for $\mathcal{A}_0$ is well known, see for instance \cite{CieliebakFrauenfelder2009}. The proof for $\mathcal{A}_\mathcal{H}$ is also standard and uses the argument from \cite{AlbersFrauenfelder2010c} and \cite[Lemma 2.2]{AlbersMerry2013a} together with  the observation of Shelukhin explained in Remark \ref{rem:motivating_K}, cf. \cite[Lemma 42]{Shelukhin2014}. More specifically, if $(x,r,\eta)\in \mathrm{Crit}\mathcal{A}_\mathcal{H}$, then $z=(x,r)$ and $\eta$ satisfy \begin{equation}\label{eq:crit}\partial_tz=\eta X_{\chi m_\delta}(z)+X_{\mathcal{H}}(z) \quad \text{and} \quad \int_{S^1} \chi(t)m_\delta(r(t))\,dt=0\end{equation} 
by an analogue of (\ref{eq:gradient}) for $\mathcal{A}_\mathcal{H}$. Note that $X_{\chi m_\delta}$ vanishes for $t\in [\frac{1}{2},1]$ while $X_{\mathcal{H}}$ vanishes for $t\in[0,\frac{1}{2}]$, since the functions $\chi(t)$ resp. $\kappa(t)$ (and thus $\mathcal{H}_t$, see Definition \ref{defn:mathcalH}) have this property by Definition \ref{defn:cutoff_functions}. Therefore 
$z(t)$ follows the flow of $X_{\chi m_\delta}$ for $t\in [0,\frac{1}{2}]$ and is thus contained in a fixed level set of the Hamiltonian function $\chi m_\delta$. By the second equation in (\ref{eq:crit}) it follows that $r(t)=1$ and thus $z(t)\in \Sigma\times\{1\}\subset S\Sigma$; therefore $x(t)$ follows a reparametrization of the Reeb flow for $t\in[0,\frac{1}{2}]$ and $z(\frac{1}{2})=(\theta_\eta(x(0)),1)$. For $t\in [\frac{1}{2},1]$ we have $\partial_tz=X_\mathcal{H}(z)$ and thus $z(t)$ follows the flow $\Phi_t^\mathcal{H}$ of $X_\mathcal{H}$. The flow $\Phi_t^{\mathcal{H}}$ is  by Remark \ref{rem:motivating_K} along $\Sigma\times \{1\}$
just a reparametrization of $\phi_t^{-1}$ and thus we have $\Phi_1^\mathcal{H}(y,1)=\phi_1^{-1}(y,1)$ for all $(y,1)\in S\Sigma$.

 As a consequence $z(1)=\phi_1^{-1}(z(\frac{1}{2}))$ and hence $z(1)=z(0)=(x(0),1)$, since $z$ is a loop. The equality $\phi_1^{-1}(x(\frac{1}{2}),1)=(x(0),1)$ implies by Lemma \ref{lem:inverse} and equation \eqref{eq:phip} that $\bar{\rho}_1(x(\frac{1}{2}))=1$. Thus if we set $p:=x(\frac{1}{2})$ the pair $(p,\eta)$ satisfies  \eqref{eqn:translated_point}.
Moreover, using the equations \eqref{eq:crit} and the identity $(r\alpha)(X_K)=\omega(r\partial_r, X_K)=dK(r\partial_r)$ for any Hamiltonian function $K$, we find that $\eta$ equals the critical value $\mathcal{A}_\mathcal{H}(x,r,\eta)$. 

Conversely, any pair $(p,\eta)\in \Sigma\times \R$ satisfying \eqref{eqn:translated_point} determines a unique critical point $(z=(x,r),\eta)$ of $\mathcal{A}_\mathcal{H}$, by finding $z:[0,1]\rightarrow S\Sigma$ solving (\ref{eq:crit}) with $z(0):=(\theta_{-\eta}(p),1)$. The resulting path $z:[0,1]\rightarrow S\Sigma$ gives rise to a loop since $\bar{\rho}_1(p)=1$.  
\end{proof} 

\begin{defn}
\label{defn:SFT_type}
We fix a complex structure $J$ on $S \Sigma$ which is compatible with $- \omega$ and of SFT-type. This means that $\omega(J \cdot ,\cdot)$ defines a Riemannian metric on $S \Sigma$, that $d r \circ J = r \alpha$ and that $J$ is invariant under the Liouville flow $(x,r) \mapsto (x, e^t r)$ for $t \in \R$.

In the Liouville fillable case $J$ is assumed to be SFT-like on $\Sigma\times[1,\infty)\subset\widehat W$ and compatible with $-\omega$ on all of $\widehat W$.
\end{defn}

Using $J$ we define an $L^{2}$-inner
product $\left\llangle \cdot,\cdot\right\rrangle$
on $\Lambda \times\R$, cf. Remark \ref{rem:loopspace_notation}: for $(z,\eta)\in\Lambda \times\R$, vector fields $\zeta_1, \zeta_2$ along the loop $z$ and 
$b_1,b_2\in \R$, set 
\begin{equation}
\left\llangle  (\zeta_1,b_1),(\zeta_2,b_2)\right\rrangle:=\int_{S^1}\omega_{z(t)} \big( J_{z(t)}\zeta_1(t),\zeta_2(t) \big)\,dt+b_1b_2.
\end{equation}
We now define the gradient flow equation for our various functionals. For simplicity we give the definition only for $\mathcal{A}^{\sigma}_s$, the others are defined analogously. We denote by $\nabla\mathcal{A}^{\sigma}_s$ the gradient
of $\mathcal{A}^{\sigma}_s$ with respect to $\left\llangle  \cdot,\cdot \right\rrangle$. Explicitly, this is the integro-differential
operator 
\begin{equation}\label{eq:gradient}
\nabla \mathcal{A}^{\sigma}_s(z,\eta)=
\left( J_z \left(\partial_t z-\eta  X_{\chi m_\delta} (z)-X_{\mathcal{H}^{\sigma}_{s,t}}(z)\right),-\int_{S^1}\chi(t) m_\delta(r(t))\, dt\right),
\end{equation}
where as usual $z= (x,r)$. A (negative) gradient flow line of $ \mathcal{A}^{\sigma}_s$ is by definition a solution $w= (u,\eta) : \R \to \Lambda  \times \R$ of the equation
\begin{equation}
\label{eq:gradient_flow_line}
\partial_{s}w(s)+\nabla \mathcal{A}^{\sigma}_s(w(s))=0.
\end{equation}
Note that the  functionals $\mathcal{A}^{\sigma}_s$ and $\mathcal{A}^{\pm}_s$ give rise to $s$-dependent gradient flow equations, whereas $\mathcal{A}_{\mathcal{H}}$ and $\mathcal{A}_0$ do not.
The \emph{energy} of such a flow line is given by 
\begin{equation}
\E(u,\eta):=\int_{-\infty}^{\infty} \|\partial_sw\|^2 \,ds\;.
\end{equation}
If $w$ is a gradient flow line of $\mathcal{A}_0$ or $\mathcal{A}^{\sigma}_s$ with finite energy then
\begin{equation}\label{eq:asymptotics}
   \lim_{s \to \pm   \infty} w(s) \in \crit \mathcal{A}_0,
\end{equation}
see \eqref{eq:asymptotic_A_sigma}. This is a standard argument, see \cite{Salamon1999} for a proof. Here the result is identical for $\mathcal{A}_s^\sigma$, since each gradient flow line $w$ of $\mathcal{A}_s^\sigma$ coincides for $|s|\geq \sigma+1$ with a gradient flow line of $\mathcal{A}_0$. Similarly if $w$ is a gradient flow line of $\mathcal{A}_{\mathcal{H}}$ with finite energy then 
\begin{equation}
   \lim_{s \to \pm   \infty} w(s) \in \crit \mathcal{A}_{\mathcal{H}}.
\end{equation}
If $w$ is a gradient flow line of  $\mathcal{A}^+_s$ with finite energy then
\begin{equation}
   \lim_{s \to -   \infty} w(s) \in \crit \mathcal{A}_0, \qquad  \lim_{s \to +   \infty} w(s) \in \crit \mathcal{A}_{\mathcal{H}},
\end{equation}
and finally if $w$ is a gradient flow line of $\mathcal{A}^-_s$ with finite energy then
\begin{equation}
   \lim_{s \to -   \infty} w(s) \in \crit \mathcal{A}_{\mathcal{H}}, \qquad  \lim_{s \to +   \infty} w(s) \in \crit \mathcal{A}_0,
\end{equation}
see \eqref{eq:asymptotic_A_+} and \eqref{eq:asymptotic_A_-}. Figure \ref{pic:flowlines} gives a schematic description of various gradient flow lines.

\begin{figure}[ht] 
\def\svgwidth{40ex} 
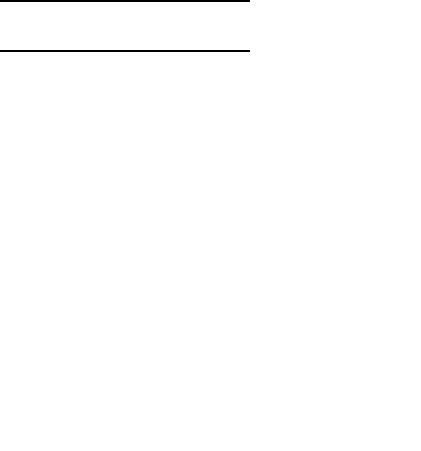
\caption{Flow lines.}\label{pic:flowlines}
\end{figure}
We now define the moduli space of gradient flow lines that will be needed in the proof of Theorem \ref{thm:technical_theorem_about_paths}. For this we first fix a constant critical point  $z_0 = (p,1,0)$ of $\mathcal{A}_0$.
\begin{defn}
\label{defn:M}
We denote by $\mathcal{M}$ the space of all pairs $(\sigma,w)$ where $w$ is a finite energy solution of \eqref{eq:gradient_flow_line} for $\mathcal{A}^{\sigma}_s$ such that 
\begin{equation}
\lim_{s \to  \infty} w(s)  = z_0=(p,1,0) \quad \text{and} \quad
\lim_{s \to  - \infty} w(s)  \text{ is a constant critical point of } \mathcal{A}_0.
\end{equation}
\end{defn} 

 We will need the following energy estimates.
\begin{lem}\label{lem:energy_of_things_in_M}
 Assume $h_t\geq 0$.
\begin{enumerate}
\item If $(\sigma, w) \in \mathcal{M}$ then 
\begin{equation}
  \E(w) \le \| \mathcal{H} \|_{\mathrm{osc}},
\end{equation}
and if $\sigma = 0$ then $\E(w) =0$. 
  \item Suppose that $w$ is a gradient flow line of $\mathcal{A}^+_s $ with finite energy. Set $z_1 := \lim_{s \to - \infty} w(s)$ and $z_2 := \lim_{s \to + \infty} w(s)$, and assume that $z_1$ is a constant critical point of $\mathcal{A}_0$.  Then
  \begin{equation}
    \E(w) \le - \mathcal{A}_{\mathcal{H}}(z_2)+\|\mathcal{H}\|_{\mathrm{osc}}.
  \end{equation}
  \item Suppose that $w$ is a gradient flow line of $\mathcal{A}^-_s $ with finite energy. Set $z_1 := \lim_{s \to - \infty} w(s)$ and $z_2 := \lim_{s \to + \infty} w(s)$, and assume that $z_2$ is a constant critical point of $\mathcal{A}_0$.  Then
  \begin{equation}
    \E(w) \le  \mathcal{A}_{\mathcal{H}}(z_1).
  \end{equation}
\end{enumerate}
\end{lem}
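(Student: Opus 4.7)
The backbone of all three estimates is the standard energy identity for a negative gradient flow line $w$ of an $s$-dependent functional $\mathcal{A}_s$. Differentiating $\mathcal{A}_s(w(s))$ along $w$ yields $\frac{d}{ds}\mathcal{A}_s(w(s)) = -\|\partial_s w\|^2 + (\partial_s\mathcal{A}_s)(w(s))$, and integration gives
\begin{equation}
\E(w) = \mathcal{A}_{-\infty}(z_-) - \mathcal{A}_{+\infty}(z_+) + \int_{-\infty}^{+\infty}(\partial_s\mathcal{A}_s)(w(s))\,ds.
\end{equation}
In each of our cases $\partial_s\mathcal{A}_s = -f'(s)\int_{S^1}\mathcal{H}_t(w(s,t))\,dt$ for a suitable cutoff $f$, so the task reduces to bounding this integral via the signs and total masses of $f'$ and the elementary inequality $-\|\mathcal{H}\|_-\le\int_{S^1}\mathcal{H}_t(w(s,t))\,dt\le\|\mathcal{H}\|_+$.

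For part (1) both asymptotic functionals of $\mathcal{A}^\sigma_s$ equal $\mathcal{A}_0$, and by Lemma~\ref{lem:critial_points_of_functionals} the two endpoints of $w$ are constant critical points of $\mathcal{A}_0$ with vanishing action; so $\E(w)=\int\partial_s\mathcal{A}^\sigma_s(w)\,ds$. Since $f_\sigma'$ is non-negative on $[-\sigma-1,-\sigma]$ of total integral $+1$ and non-positive on $[\sigma,\sigma+1]$ of total integral $-1$, bounding $\int\mathcal{H}_t(w)\,dt$ below by $-\|\mathcal{H}\|_-$ on the first interval and above by $\|\mathcal{H}\|_+$ on the second yields $\E(w)\le\|\mathcal{H}\|_-+\|\mathcal{H}\|_+=\|\mathcal{H}\|_{\mathrm{osc}}$. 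When $\sigma=0$ the family is normalised so that $f_0\equiv 0$, making $\mathcal{A}^0_s\equiv\mathcal{A}_0$; the identity with zero-action endpoints then forces $\E(w)=0$, hence $w\equiv z_0$. For part (2), $\mathcal{A}_0(z_1)=0$ reduces the identity to $\E(w)=-\mathcal{A}_{\mathcal{H}}(z_2)+\int\partial_s\mathcal{A}^+_s(w)\,ds$, and the same one-sided estimate (using $(f_\infty^+)'\ge 0$) gives $\int\partial_s\mathcal{A}^+_s(w)\,ds\le\|\mathcal{H}\|_-\le\|\mathcal{H}\|_{\mathrm{osc}}$.

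Part (3) is the delicate one. Now $z_2$ is a constant critical point of $\mathcal{A}_0$ and, as the $+\infty$-limit of an $\mathcal{A}^+_s$-trajectory, is also a critical point of $\mathcal{A}_{\mathcal{H}}$, so Lemma~\ref{lem:critial_points_of_functionals} gives $\mathcal{A}_{\mathcal{H}}(z_2)=0$ and forces $\int_{S^1}\mathcal{H}_t(z_2(t))\,dt=0$. The energy identity reads $\E(w)=\mathcal{A}_0(z_1)+\int\partial_s\mathcal{A}^+_s(w)\,ds$, and the naive uniform bound only gives $\E(w)\le\mathcal{A}_0(z_1)+\|\mathcal{H}\|_-$, which is in general strictly weaker than the claimed $\mathcal{A}_{\mathcal{H}}(z_1)=\mathcal{A}_0(z_1)-\int_{S^1}\mathcal{H}_t(z_1(t))\,dt$. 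The main obstacle is thus to replace the uniform $\|\mathcal{H}\|_-$ by the precise pointwise value $-\int_{S^1}\mathcal{H}_t(z_1(t))\,dt$. My plan is to integrate $\int\partial_s\mathcal{A}^+_s(w)\,ds$ by parts in $s$: since $f_\infty^+(-\infty)=0$ and $\int\mathcal{H}_t(z_2(t))\,dt=0$, both boundary contributions vanish and one obtains $\int\partial_s\mathcal{A}^+_s(w)\,ds=\int f_\infty^+(s)\,G'(w(s))\,ds$ where $G(z):=\int_{S^1}\mathcal{H}_t(z(t))\,dt$. Because $G(z_2)-G(z_1)=-\int_{S^1}\mathcal{H}_t(z_1(t))\,dt$, the desired inequality rewrites as $\int_{-\infty}^{+\infty}(1-f_\infty^+(s))\,G'(w(s))\,ds\ge 0$, which I would establish by exploiting that on $(-\infty,-1]$ the flow is a genuine $\mathcal{A}_0$-trajectory and on $[0,\infty)$ a genuine $\mathcal{A}_{\mathcal{H}}$-trajectory (so $\mathcal{A}_{\mathcal{H}}(w(s))$ is non-increasing there to its asymptotic value $0$), combined with the fact that on the interpolation region $[-1,0]$ the weight $1-f_\infty^+$ lies in $[0,1]$.
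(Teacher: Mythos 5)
Parts (1) and (2) of your argument are correct and essentially the paper's own: the energy identity for an $s$-dependent functional plus one-sided bounds on $\int_{S^1}\mathcal{H}_t\,dt$ weighted by the sign and total mass of the cutoff derivative. (A minor imprecision: for $\sigma<1$ the mass of $f_\sigma'$ on $[-\sigma-1,-\sigma]$ is $f_\sigma(-\sigma)\le 1$ rather than exactly $1$, but this only strengthens the estimate.)

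Part (3) is where there is a genuine gap, and its source is that you have taken at face value what is a typo in the statement: part (3) concerns gradient flow lines of $\mathcal{A}^-_s$, not $\mathcal{A}^+_s$. This is forced by the way the lemma is applied in the proof of Theorem \ref{thm:technical_theorem_about_paths}, where part (3) is invoked for the flow line $w^-$ of $\mathcal{A}^-_s$ whose $+\infty$-asymptote is a constant critical point of $\mathcal{A}_0$ and whose $-\infty$-asymptote is a critical point of $\mathcal{A}_{\mathcal{H}}$. With $\mathcal{A}^-_s$ the proof is one line: in the energy identity the $s$-dependence contributes $-\int (f_\infty^-)'(s)\left(\int_{S^1}\mathcal{H}_t(u(s,t))\,dt\right)ds$, and since $-(f_\infty^-)'\ge 0$ while $\mathcal{H}_t\le 0$ pointwise (the crucial consequence of $h\ge 0$, which your write-up never invokes), this term is $\le 0$; together with $\mathcal{A}_0(z_2)=0$ this gives $\E(w)\le\mathcal{A}_{\mathcal{H}}(z_1)$ immediately.

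Your attempted repair of the literal $\mathcal{A}^+_s$-statement does not close. After integrating by parts you must show $\int_{-\infty}^{+\infty}(1-f_\infty^+(s))\,\tfrac{d}{ds}G(w(s))\,ds\ge 0$ with $G(z)=\int_{S^1}\mathcal{H}_t(z(t))\,dt$, but the facts you cite control the monotonicity of $\mathcal{A}_0(w(s))$ and $\mathcal{A}_{\mathcal{H}}(w(s))$ along the flow, not of the unrelated quantity $G(w(s))$. On $(-\infty,-1]$, where the weight equals $1$, nothing prevents $G(w(s))$ from decreasing, so the integral has no definite sign; indeed the desired inequality amounts to comparing a weighted average of $-G(w(s))$ over the interpolation region with $-G(z_1)$, for which there is no soft reason. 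So either correct the statement to $\mathcal{A}^-_s$ and use the sign of $\mathcal{H}$, or accept that your part (3) is unproved.
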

\begin{proof}
To prove (1), we denote by $z = \lim_{s \to -\infty}w(s)$ and  write $w = (u, \eta)$ and $u(s,t) = (x(s,t),r(s,t))$. Then we compute
by using (\ref{eq:gradient_flow_line})
\begin{equation} \begin{aligned}
\E(u ,\eta) & = \mathcal{A}_0(z) - \mathcal{A}_0(z_0) - \int_{- \infty}^{\infty}\int_0^1  \partial_s \mathcal{H}^{\sigma}_{s,t} (u(s,t))\, dt\,ds\\
& = \int_{- \infty}^{\infty}\int_0^1  -f_{\sigma}'(s)  \mathcal{H}_t(u(s,t))  \, dt\,ds\\
& = \int_{- \infty}^{0}\int_0^1  \underbrace{-f_{\sigma}'(s)}_{\le 0}  \mathcal{H}_t(u(s,t))  \, dt\,ds + \int_{0}^{\infty}\int_0^1  \underbrace{-f_{\sigma}'(s)}_{\ge 0} \mathcal{H}_t(u(s,t))  \, dt\,ds\\
& \le  \int_{- \infty}^{0} -f_{\sigma}'(s) \int_0^1   \min_{(x,r) } \mathcal{H}_t(x,r)  \, dt\,ds + \int_{0}^{\infty} -f_{\sigma}'(s) \int_0^1   \max_{(x,r)} \mathcal{H}_t(x,r)  \, dt\,ds\\
& = f_{\sigma}(0) \| \mathcal{H}\|_+-  f_{\sigma}(0) \| \mathcal{H} \|_- \\
& \le \| \mathcal{H} \|_{\mathrm{osc}}.
\end{aligned}\end{equation} 
If $\sigma = 0$ then since $f_{0}(0) = 0$ we obtain $\E(u ,\eta) = 0$.
To prove (2) we estimate
\begin{equation} \begin{aligned}
\E(u ,\eta) & = \mathcal{A}_0(z_1) - \mathcal{A}_\mathcal{H}(z_2) + \int_{- \infty}^{\infty}\int_0^1   \underbrace{-(f^+_{\infty})'(s)}_{\le 0} \mathcal{H}_t (u(s,t))\, dt\,ds\\
& \le - \mathcal{A}_\mathcal{H}(z_2) - \| \mathcal{H}\|_-.
\end{aligned}\end{equation} 
Similarly for (3) we obtain 
\begin{equation} \begin{aligned}
\E(u ,\eta) & = \mathcal{A}_\mathcal{H}(z_1) - \mathcal{A}_0(z_2) + \int_{- \infty}^{\infty}\int_0^1   \underbrace{-(f^-_{\infty})'(s)}_{\ge 0} \mathcal{H}_t (u(s,t))\, dt\,ds\\
& \le  \mathcal{A}_\mathcal{H}(z_1) + \| \mathcal{H}\|_+.
\end{aligned}\end{equation} 
Finally since $\mathcal{H}$ is non-positive and compactly supported by construction, we have $\| \mathcal{H} \|_+ = 0$ and thus $\| \mathcal{H} \|_{\mathrm{osc}} = -\| \mathcal{H}\|_-$.
 \end{proof} 
We denote by 
\begin{equation}
\label{eq:the_map_pi}
  \mathrm{pr} : \mathcal{M} \to [0, \infty), \qquad (\sigma, w) \mapsto \sigma
\end{equation}
the projection onto the first coordinate. Note that $\mathrm{pr}^{-1}(0)$ consists of a single point, namely $(0, z_0)$, where the constant critical point $z_0$ is thought of as a constant gradient flow line of $\mathcal{A}_0$, where we are using the last statement of part (1) of Lemma \ref{lem:energy_of_things_in_M}.

\begin{rem}
In the following Theorem we abbreviate by $C^{\infty}_{\mathrm{loc}}(\R, \Lambda(S \Sigma) \times \R)$ the space of pairs $w=(u,\eta)$ of smooth maps $\eta:\R\to\R$ and $u:\R\times S^1\to S\Sigma$ equipped with $C^{\infty}_{\mathrm{loc}}$-convergence. A key step in the proof of the main theorem is the following $C^{\infty}_{\mathrm{loc}}$-compactness statement.
\end{rem}

\begin{thm}
\label{thm:compactness} $ $
\begin{enumerate}
\item Assume that 
\begin{equation}
e^{5\delta}  \| h \|_+  < \varrho \cdot \wp( \nu_0),
\end{equation} 
where $\varrho=\varrho(h) >0$ was defined in \eqref{eq:inf_rho}. Then given any sequence $(\sigma_k, w_k) \in \mathcal{M}$ and any sequence $(s_k) \in \R$ of real numbers, the reparametrised sequence $w_k( \cdot +s_k)$ has  a subsequence which converges in $C^{\infty}_{\mathrm{loc}}(\R, \Lambda(S \Sigma) \times \R)$. 
\item Assume that $\Sigma$ is Liouville fillable. Then given any sequence $(\sigma_k, w_k) \in \mathcal{M}$ and any sequence $(s_k) \in \R$ of real numbers, the reparametrised sequence $w_k( \cdot +s_k)$ has  a subsequence which converges in $C^{\infty}_{\mathrm{loc}}(\R, \Lambda( \widehat W) \times \R)$. 
\end{enumerate}
\end{thm}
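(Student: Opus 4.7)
The plan is to establish the standard $C^\infty_{\mathrm{loc}}$-compactness for Rabinowitz--Floer type moduli spaces, which reduces to three uniform $L^\infty$-bounds: (i) on the $r$-coordinate of the cylinder, (ii) on the Lagrange multiplier $\eta$, and (iii) on the first derivatives of the flow line. Once these are in place, elliptic bootstrapping applied to the perturbed Cauchy--Riemann equation \eqref{eq:gradient_flow_line} produces a $C^\infty_{\mathrm{loc}}$-convergent subsequence in the usual way.

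For (i) I would use a maximum principle argument exploiting the SFT-like complex structure $J$ from Definition \ref{defn:SFT_type}. By \eqref{eq:supp} each $\mathcal{H}^\sigma_{s,t}$ is compactly supported in $r$ inside $\Sigma \times [\varrho e^{-2\delta}, \Pi e^{2\delta}]$, and the function $m$ from Definition \ref{defn:cutoff_functions} satisfies $m' \equiv 0$ outside $[e^{-2\delta}, e^{2\delta}]$. Consequently, outside a fixed compact set in $r$, the gradient flow equation reduces to the pure $J$-holomorphic cylinder equation, and the identity $dr \circ J = r\alpha$ gives the standard subharmonicity that pins $r(s,t)$ into a fixed compact interval in $(0,\infty)$, using the asymptotic condition $w(-\infty) = z_0$ with $r = 1$. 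In the Liouville fillable case the same argument works on the cylindrical end $\Sigma \times [1,\infty) \subset \widehat W$, and compactness of $W$ handles the interior.

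The main obstacle is (ii). In part (1) I would argue by contradiction using a Fundamental Lemma in the style of Cieliebak--Frauenfelder: assume $|\eta_k(s_k)|$ is unbounded along some sequence, and rescale so that $|\eta_k|$ attains its supremum at $s = 0$. The energy bound $\mathcal{E}(w_k) \le \|\mathcal{H}\|_{\mathrm{osc}} \le e^{2\delta}\|h\|_+$ from Lemmas \ref{lem:energy_of_things_in_M} and \ref{lem:Kosc_and_h}, combined with the rescaled $L^2$-estimate, forces $\partial_s u_k \to 0$ in $L^2_{\mathrm{loc}}$. Passing to the limit produces a non-constant critical point of $\mathcal{A}_0$ which, by Lemma \ref{lem:critial_points_of_functionals}(1), corresponds to a closed Reeb orbit; contractibility is preserved from $w_k(-\infty) = z_0$, so the orbit lies in $\nu_0$. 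Tracking constants through the rescaling together with the step (i) $r$-bound and the factor $e^{2\delta}$ from Lemma \ref{lem:Kosc_and_h}, the period of this orbit is at most $e^{5\delta}\|h\|_+/\varrho$, and the hypothesis $e^{5\delta}\|h\|_+ < \varrho \cdot \wp(\nu_0)$ contradicts the definition of $\wp(\nu_0)$. In part (2) the period hypothesis is absent, and one instead uses that the bubble produced by the same rescaling lives inside the exact symplectic manifold $\widehat W$; Stokes' theorem applied to $\lambda$ bounds the energy of any non-constant $J$-holomorphic plane below by its asymptotic Reeb orbit's period, and combined with the finite energy of $w_k$ this directly forbids $|\eta_k|\to\infty$.

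With (i) and (ii) in hand, step (iii) is routine: exactness of $\omega = d(r\alpha)$ on $S\Sigma$ (respectively $\omega = d\lambda$ on $\widehat W$) rules out non-constant $J$-holomorphic spheres, so standard Gromov bubbling analysis excludes gradient blow-up on any compact $s$-interval. Elliptic regularity for \eqref{eq:gradient_flow_line} then upgrades the uniform $C^0$-bound to uniform $C^k$-bounds on compact $s$-intervals for every $k$, and a diagonal subsequence argument produces the desired $C^\infty_{\mathrm{loc}}$-limit.
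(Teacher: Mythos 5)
Your skeleton (uniform $r$-bound, uniform $\eta$-bound, then bootstrapping) is the right one, and your treatment of part (2) and of the \emph{upper} $r$-bound matches the paper. The genuine gap is in step (i) for part (1): the maximum principle does not ``pin $r(s,t)$ into a fixed compact interval''. For an SFT-type $J$ the function $\log r$ is subharmonic on the region where the curve is honestly $J$-holomorphic, which forbids interior maxima and hence yields only an upper bound on $r$; there is no corresponding minimum principle, and curves in a symplectisation can perfectly well escape into the negative end $\Sigma\times(0,\epsilon)$. Excluding this escape is the entire content of part (1) and is the only place where the hypothesis $e^{5\delta}\|h\|_+<\varrho\cdot\wp(\nu_0)$ enters. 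The paper's argument runs as follows: if $\inf r_k\to 0$, restrict $u_k$ to $Z_k:=u_k^{-1}\bigl(\Sigma\times(0,e^{-\varepsilon})\bigr)$ for a suitable $\varepsilon$ with $2\delta-\log\varrho<\varepsilon\le 3\delta-\log\varrho$; by \eqref{eq:supp} the perturbation vanishes on that region, so the restrictions are genuine $J$-holomorphic curves whose Hofer energy is controlled by $e^{\varepsilon}\,\E(u_k,\eta_k)\le e^{\varepsilon}\|\mathcal{H}\|_{\mathrm{osc}}\le e^{\varepsilon+2\delta}\|h\|_+<\wp(\nu_0)$ --- this is exactly where the factors $e^{5\delta}$ and $\varrho$ are consumed; Theorem \ref{thm:SFT} then extracts a long cylinder converging to a \emph{contractible} closed Reeb orbit of period strictly less than $\wp(\nu_0)$, contradicting the definition of $\wp(\nu_0)$. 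In part (2) the negative end is capped off by the compact filling $W$, which is precisely why the hypothesis is absent there.

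Relatedly, you spend the period hypothesis in step (ii) on the $\eta$-bound, but that is not where it belongs: the Lagrange-multiplier bound is the standard Cieliebak--Frauenfelder argument (as the paper notes) and needs no such hypothesis, while your claimed period estimate $e^{5\delta}\|h\|_+/\varrho$ for the rescaling limit is asserted rather than derived --- nothing in the rescaling procedure produces that bound. Likewise the Stokes/holomorphic-plane argument you offer for the $\eta$-bound in part (2) is not connected to the quantity $\eta_k$ at all. Once the hypothesis is moved to its correct place (the lower $r$-bound via SFT compactness), steps (ii) and (iii) go through as standard arguments.
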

The proof of Theorem \ref{thm:compactness} uses ideas from SFT compactness, and is deferred to Section \ref{sec:SFTcompactness} below. The next result is the key technical argument needed to prove our main theorem. The argument is based on  ``neck stretching'' and similar to \cite[Section 2.1]{AlbersFrauenfelder2010c}.

\begin{thm}
\label{thm:technical_theorem_about_paths}$ $
\begin{enumerate} 
\item  Assume $h_t (x) \ge 0$  for all $x \in \Sigma$ and $t \in [0,1]$ is not identically $0$ and such that 
\begin{equation}
  \| h \|_+ < \varrho \cdot \wp(  \nu_0)
\end{equation}
where $\varrho=\varrho(h) >0$ was defined in \eqref{eq:inf_rho}. 
Then there exists a critical point $(x,r, \eta) $ of $\mathcal{A}_\mathcal{H}$ with 
\begin{equation}
  0 < \eta \le  \| h \|_+.
\end{equation}
\item Assume that $\Sigma$ is Liouville fillable and $h_t (x) \ge 0$  for all $x \in \Sigma$ and 
$t \in [0,1]$ is not identically $0$ and such that 
\begin{equation}
  \| h \|_+ <  \wp(  \nu_0)
\end{equation}
Then there exists a critical point $(x,r, \eta) $ of $\mathcal{A}_\mathcal{H}$ with 
\begin{equation}
  0 < \eta \le  \| h \|_+.
\end{equation}

\end{enumerate}
\end{thm}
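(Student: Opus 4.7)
The plan is to run a neck-stretching / cobordism argument on the parametrised moduli space $\mathcal{M}$ of Definition \ref{defn:M}, producing a critical point of $\mathcal{A}_{\mathcal{H}}$ as a $C^{\infty}_{\mathrm{loc}}$-limit of gradient trajectories whose parameter $\sigma$ tends to $\infty$. I would fix $\delta > 0$ small; since by Lemma \ref{lem:critial_points_of_functionals}(2) the critical set and critical values of $\mathcal{A}_{\mathcal{H}}$ are independent of $\delta$, one is free to send $\delta \to 0$ at the end, and so the bound $e^{5\delta}\|h\|_+ < \varrho\,\wp(\nu_0)$ required by Theorem \ref{thm:compactness}(1) is automatic from the hypothesis $\|h\|_+ < \varrho\,\wp(\nu_0)$ of part~(1). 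The starting observation is that $\mathrm{pr}^{-1}(0) = \{(0, z_0)\}$: indeed $f_0 \equiv 0$ gives $\mathcal{H}^0 \equiv 0$, so Lemma \ref{lem:energy_of_things_in_M}(1) forces any element of $\mathrm{pr}^{-1}(0)$ to have zero energy and hence to be the constant trajectory at $z_0$.

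The central step is to show that $\mathrm{pr}: \mathcal{M} \to [0, \infty)$ is unbounded. After generic perturbations making the relevant linearised operators Fredholm and transverse, $\mathcal{M}$ should be a smooth $1$-manifold whose only boundary point is $(0, z_0)$. If $\mathrm{pr}$ were bounded on $\mathcal{M}$, then Theorem \ref{thm:compactness} (applied with $s_k = 0$, together with the uniform energy bound of Lemma \ref{lem:energy_of_things_in_M}(1) to exclude interior breaking) would make $\mathcal{M}$ compact, contradicting the classification of compact $1$-manifolds with only one boundary point. The hypotheses of parts~(1) and~(2) of the theorem are precisely what is needed to invoke the corresponding parts of Theorem \ref{thm:compactness}. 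Hence I obtain a sequence $(\sigma_k, w_k) \in \mathcal{M}$ with $\sigma_k \to \infty$.

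Since $\mathcal{H}^{\sigma_k}_{s,t} = \mathcal{H}_t$ on $[-\sigma_k, \sigma_k]$, the trajectories $w_k$ satisfy the $s$-independent $\mathcal{A}_{\mathcal{H}}$-gradient equation on a widening window around $s = 0$. A further application of Theorem \ref{thm:compactness} to the sequence $(\sigma_k, w_k)$ with vanishing shifts yields a $C^{\infty}_{\mathrm{loc}}$-subsequential limit $w_\infty$, which is a gradient flow line of $\mathcal{A}_{\mathcal{H}}$ on all of $\R$ with energy at most $\|\mathcal{H}\|_{\mathrm{osc}}$ by Lemma \ref{lem:energy_of_things_in_M}(1). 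Finite energy forces asymptotic convergence, so $z_\pm := \lim_{s \to \pm \infty} w_\infty(s)$ are critical points of $\mathcal{A}_{\mathcal{H}}$, and one of them supplies the desired $(x, r, \eta)$ with $\mathcal{A}_{\mathcal{H}}(x,r,\eta)=\eta$. For the action bound, view the two ends of $w_k$ as modelled on flow lines of $\mathcal{A}^\pm_s$ between a constant critical point of $\mathcal{A}_0$ and $z_\pm$; then Lemma \ref{lem:energy_of_things_in_M}(2)--(3), combined with $\|\mathcal{H}\|_+ = 0$ (since $\mathcal{H}_t \le 0$) and Lemma \ref{lem:Kosc_and_h}, give
\[
0 \le \eta \le \|\mathcal{H}\|_{\mathrm{osc}} \le e^{2\delta}\|h\|_+,
\]
and letting $\delta \to 0$ finishes both parts. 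The main obstacle I anticipate is the unboundedness argument above: one must set up transversality carefully and use the uniform action and energy bounds to rule out any interior breaking of $\mathcal{M}$ at finite $\sigma$ that would spoil the parity argument; this is precisely the role of the quantitative hypotheses in Theorem \ref{thm:compactness}.
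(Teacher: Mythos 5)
Your proposal is correct and follows essentially the same argument as the paper: the same moduli space $\mathcal{M}$ with its projection $\mathrm{pr}$, the same energy estimates (Lemma \ref{lem:energy_of_things_in_M}) and Lemma \ref{lem:Kosc_and_h}, the same compactness theorem with the exclusion of breaking into non-constant $\mathcal{A}_0$-trajectories via $\|\mathcal{H}\|_{\mathrm{osc}} < \wp(\nu_0)$, the same index-one Fredholm/cobordism argument anchored at $(0,z_0)$, and the same $\delta \to 0$ limit at the end. The only difference is that you run the logic directly (non-compactness of $\mathcal{M}$ forces $\sigma_k \to \infty$, whose broken limit yields critical points $z_1, z_2$ of $\mathcal{A}_{\mathcal{H}}$ with $0 \le \mathcal{A}_{\mathcal{H}}(z_2) \le \mathcal{A}_{\mathcal{H}}(z_1) \le e^{2\delta}\|h\|_+$), whereas the paper arranges the identical ingredients as a proof by contradiction.
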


\begin{proof}
We prove the statement in case (1). The proof in case (2) is simpler and explain the necessary adaption along the way.

We may assume that 
\begin{equation}\label{critical_values_bounded_away_from_0}
\inf\{\eta>0\mid\eta\text{ is a critical value of }\mathcal{A}_\mathcal{H}\}>0
\end{equation}
since otherwise there is nothing to prove. First choose $\delta_0 >0$ such that
 \begin{equation}
\label{eq:5delta}
  e^{5 \delta_0} \| h \|_+ < \varrho \cdot \wp(  \nu_0),
\end{equation}
where $\varrho=\varrho(h) >0$ was defined in \eqref{eq:inf_rho}. In case (2) we simply choose $\delta_0 >0$ such that  $e^{5 \delta_0} \| h \|_+ <  \wp(  \nu_0)$.

Fix $0 < \delta \le \delta_0$. We will show there exists a critical point $(x,r, \eta) $ of $\mathcal{A}_\mathcal{H}$ (where $\mathcal{H}$ is defined with this choice of $\delta$) with 
\begin{equation}
  0 < \eta \le e^{2 \delta} \| h \|_+.
\end{equation}
Since the critical points and critical values of $\mathcal{A}_{\mathcal{H}}$ do not depend on $\delta$ (cf.~the last line of Lemma \ref{lem:critial_points_of_functionals}), and $\delta$ can be arbitrarily small, the Theorem of Arzela-Ascoli together with \eqref{critical_values_bounded_away_from_0} implies that there is in fact a critical value $\eta$ satisfying $0 < \eta \le  \| h \|_+$.

Since $h\neq 0$ and $h_t(x)\geq0$ for all $x\in\Sigma$ and $t\in [0,1]$ we can choose $(p,t_0)\in\Sigma\times [0,1]$ with $h_{t_0}(p)>0$. Then we choose in Definition \ref{defn:M} for the moduli space $\mathcal{M}$ the critical point $z_0=(p,1,0)$. By assumption Theorem \ref{thm:compactness} is applicable. We prove the theorem in two steps.\\
\noindent \textbf{Step 1:} Assume that all critical points  $(x,r, \eta)$ of $\mathcal{A}_\mathcal{H}$ satisfy 
\begin{equation}
\label{eq:to_contradict}
   \eta \notin (0, e^{2 \delta}\| h \|_+].
\end{equation}
We will show that this implies the moduli space $\mathcal{M}$ is compact. In Step 2 below we will show this gives rise to a contradiction.\\

Let $(\sigma_k, w_k) $ be a sequence in $\mathcal{M}$. After passing to a subsequence, either $\sigma_k \to \sigma \in [0, \infty)$ or $\sigma_k \to \infty$. Assume the former. Then by Theorem \ref{thm:compactness}, after passing to another subsequence, the sequence $(w_k )$ converges to a (possibly) broken flow line of the form depicted in Figure \ref{pic:breaking1}, according to the statement in (\ref{eq:asymptotics}). In the next formula we use Lemma \ref{lem:Kosc_and_h} and the fact that $\varrho \le 1$, cf.~the line immediately after equation \eqref{eq:inf_rho}. If breaking occurs, then there exists a non-constant gradient flow line $w$ of $\mathcal{A}_0$ with energy
\begin{equation}
\label{eq:Ew}
\E (w) \le \limsup_k \E (w_k) \le \| \mathcal{H} \|_{\mathrm{osc}} \leq e^{2\delta}\|h\|_+\le \varrho \cdot e^{-3\delta} \wp(\nu_0)<\wp(\nu_0),
\end{equation}
and at least one of the asymptotic limits $\lim_{s \to \pm \infty}w(s)$ is a constant critical point. 

In case (2) we may apply Theorem \ref{thm:compactness} actually a smallness assumption of $h$. Our choice of $\delta_0$ guarantees that the estimate $\E (w) <\wp(\nu_0)$ corresponding to \eqref{eq:Ew} still holds.

Thus at least one of the asymptotic limits has action value zero. The energy $\E(w)$ of the non-constant gradient flow line $w$ is positive and is the difference of the action of the asymptotic limit as $\mathcal{A}_0$ does not depend on $s$. The other asymptote cannot be constant, since otherwise both ends would have action zero and thus the energy would be zero. Thus by part (1) of Lemma \ref{lem:critial_points_of_functionals}, the other end is a closed contractible Reeb orbit (possibly parametrised backwards), and hence the absolute value of the action value is at least $\wp( \nu_0)$. This gives $\E( w) \ge \wp( \nu_0)$, which contradicts \eqref{eq:Ew}. Thus in Figure \ref{pic:breaking1} no breaking occurs, and so $(\sigma_k, w_k)$ converges up to a subsequence to an element of $\mathcal{M}$. 

%%%%%%%%%%%%%%%
\begin{figure}[ht] 
\def\svgwidth{80ex} 
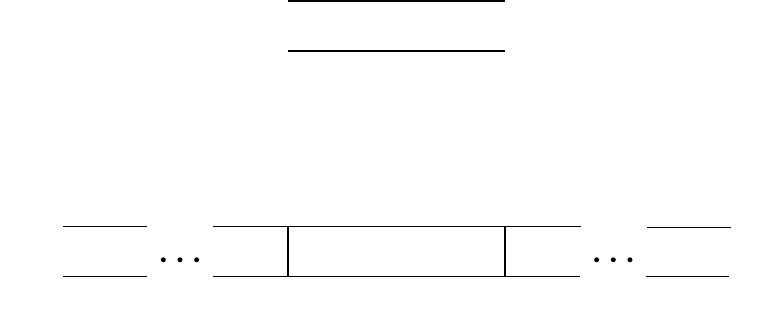
\caption{Breaking in the case $\sigma_k \to \sigma$.}\label{pic:breaking1}
\end{figure}
%%%%%%%%%%%%%%%

Now assume that $\sigma_k \to \infty$. Then by Theorem \ref{thm:compactness}, after passing to another subsequence, the sequence $(w_k)$ converges to a broken flow line, see Figure \ref{pic:breaking2}. 

%%%%%%%%%%%%%%%
\begin{figure}[ht] 
\def\svgwidth{85ex} 
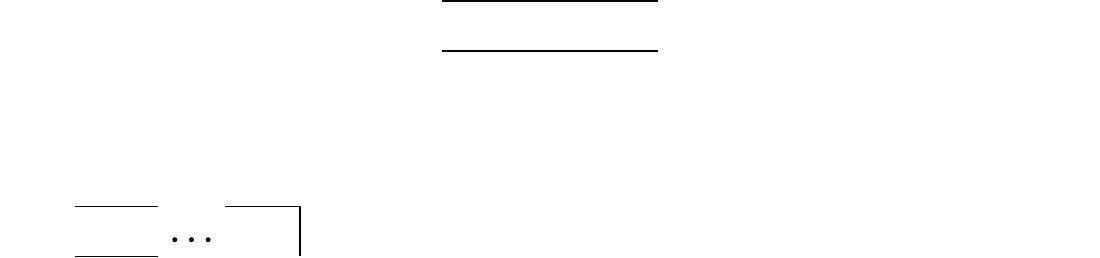
\caption{Breaking in the case $\sigma_k \to \infty$.}\label{pic:breaking2}
\end{figure} 
%%%%%%%%%%%%%%%

In fact, by arguing as above, there cannot be breaking into flow lines of $\mathcal{A}_0$.  Thus we obtain two gradient flow lines $w^{\pm}$ of the $s$-dependent functionals $\mathcal{A}^{ \mp}_s$ such that $(p,1,0)=z_0=\lim_{s \to  \infty} w^+(s)$  and $\lim_{s \to -\infty}w^-(s)$ are both constant critical points of $\mathcal{A}_0$. Set $z_1 := \lim_{s \to -\infty} w^+(s)$ and $z_2 := \lim_{s \to  \infty} w^-(s)$, see Figure \ref{pic:breaking3}. 

%%%%%%%%%%%%%%%
\begin{figure}[ht] 
\def\svgwidth{70ex} 
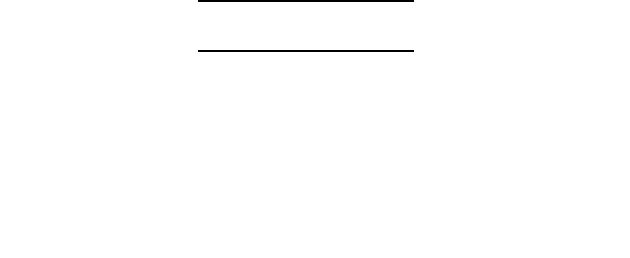
\caption{Breaking in the case $\sigma_k \to \infty$, (after simplification).}\label{pic:breaking3}
\end{figure} 
%%%%%%%%%%%%%%%
We first argue that $w^+$ is not constant since otherwise $z_0=z_1$ and therefore $z_0$ had to be a critical point of $\mathcal{A}_\mathcal{H}$. This is impossible due to our choice $z_0=(p,1,0)$ with $h_{t_0}(p)>0$ as we explain now. According to (2) in Lemma \ref{lem:critial_points_of_functionals} critical points of $\mathcal{A}_\mathcal{H}$ have to first follow the flow of $\theta_{\eta t}$. In our case $\eta$ equals $0$ and thus the critical point stays at $p$. After that the critical point has to follow the flow $\varphi_t^{-1}$. Since the critical point $z_0$ is constant the point $p$ needs to be fixed by $\varphi_t^{-1}$ for all $t$. This contradicts our choice of $p$, indeed since there exists $t_0$ with $h_{t_0}(p)>0$ the flow $\varphi_t$ (whose contact Hamiltonian is $h_t$) cannot fix $p$ for all $t$. Thus $w^+$ is not constant and therefore we get \begin{equation}
\E(w^+) >0.
\end{equation}
Moreover since $z_2$ is connected to $z_1$ by a sequence of gradient flow lines of $\mathcal{A}_\mathcal{H}$, we have 
\begin{equation}\label{eq:action_ineq}
  \mathcal{A}_\mathcal{H}(z_1) \le \mathcal{A}_\mathcal{H}(z_2).
\end{equation}
From Lemma \ref{lem:energy_of_things_in_M} parts (2) and (3), we have
\begin{equation}\label{eq:w_minus}
  0 \le \E(w^-) \le  - \mathcal{A}_\mathcal{H}(z_2) +\|\mathcal{H}\|_{\mathrm{osc}}
\end{equation}
and 
\begin{equation}\label{eq:w_plus}
   0 <\E(w^+) \le \mathcal{A}_\mathcal{H}(z_1).
\end{equation}
Combining the last three inequalities and Lemma \ref{lem:Kosc_and_h}, we obtain
\begin{equation}
  0 \stackrel{\eqref{eq:w_plus}}{<} \mathcal{A}_\mathcal{H}(z_1) \stackrel{\eqref{eq:action_ineq}}{\le} \mathcal{A}_\mathcal{H}(z_2) \stackrel{\eqref{eq:w_minus}}{\le}  \| \mathcal{H} \|_{\mathrm{osc}} \stackrel{\text{Lemma }\ref{lem:Kosc_and_h}}{\le} e^{2 \delta }\|h \|_+.
\end{equation}
This contradicts \eqref{eq:to_contradict}, and hence this latter case cannot occur. Thus $\mathcal{M}$ is compact as claimed. This finishes the proof of Step 1. \\

\noindent \textbf{Step 2:}
We now obtain a contradiction from the assertion that the moduli space $\mathcal{M}$ is compact. \\

Let us assume now that the moduli space $\mathcal{M}$ is compact. $\mathcal{M}$ is the zero-set of a Fredholm section of index $1$  of a Banach-space bundle over a Banach manifold.  In more detail we consider the Banach manifold $\mathcal{B}$ consisting of triples $(\sigma,u,\eta)$ with $\sigma\in[0,\infty)$ and $W^{1,p,\epsilon}$-maps, $p>2$, $\eta:\R\to\R$ and $u:\R\times S^1\to S\Sigma$. Here the superscript $\epsilon$ refers to required exponential decay of rate $\epsilon>0$ at the asymptotic ends. The value of $\epsilon>0$ is smaller than the spectral gap of the Hessian of $\mathcal{A}_0$ at its critical points, see for instance \cite[Appendix A]{Frauenfelder2004} or \cite[Section 5]{BourgeoisOancea2010}. On $\mathcal{B}$ we consider a Banach-bundle $\mathcal{E}\to\mathcal{B}$ whose fiber over $(\sigma,u,\eta)$ is given by $L^{p,\epsilon}(\R,\R)\times L^{p,\epsilon}(u^*TS\Sigma)$. Then equation \eqref{eq:gradient_flow_line} represents the zero-set of a Fredholm section $\mathcal{F}:\mathcal{B}\to\mathcal{E}$. The index of $\mathcal{F}$ is $1$ due to the parameter $\sigma$. Indeed, for fixed $\sigma$ we obtain a Fredholm operator of index $0$, as can be seen at the special zero $(0,z_0)$.

Recall the map $\mathrm{pr} : \mathcal{M} \to [0, \infty)$ from \eqref{eq:the_map_pi}, and  that $\mathrm{pr}^{-1}(0)$ consists of a single point $(0,z_0)$. The Fredholm section $\mathcal{F}$ is transverse at the  point $(0,z_0)$, since the functional $\mathcal{A}_0$ is Morse-Bott at the component $\Sigma \times \{1\} \times \{0\}$ of constant critical points, cf. \cite[Lemma 2.12]{AlbersFrauenfelder2010c}. Thus, this Fredholm section is also transverse in a small neighbourhood of $(0,z_0)$. Since its zero-set $\mathcal{M}=\{\mathcal{F}=0\}$ is compact, this Fredholm section can be perturbed to a transverse Fredholm section $\widetilde{\mathcal{F}}$ in such a way that $\mathcal{M}^{\mathrm{reg}}:=\{\widetilde{\mathcal{F}}=0\}$ remains compact. Since the section $\mathcal{F}$ is already transverse near $(0,z_0)$, it suffices to perturb $\mathcal{F}$ away from this point, see \cite[Theorems 5.5 and 5.13]{HoferWysockiZehnder2014} or \cite[Theorem 5.21]{HoferWysockiZehnder2009}. Since the index of $\widetilde{\mathcal{F}}$ is $1$ we conclude that $\mathcal{M}^{\mathrm{reg}}$ is a smooth compact one-dimensional manifold with boundary  $ \partial \mathcal{M}^{\mathrm{reg}}$ equal to the point $(0,z_0)$. This is a contradiction to the classification of 1-manifolds and finishes the proof of Theorem \ref{thm:technical_theorem_about_paths}.
\end{proof}

Here is the main result of the paper, which was stated as Theorem \ref{thm:thm1_intro} resp.~Theorem \ref{thm:thm1_intro_Liouville_fillable} in the Introduction. For clarity, in the statement we denote the positive loop by $\psi$ instead of $\varphi$, since in the proof we will modify $\psi$ to a positive path $\varphi$, which we will then apply Theorem \ref{thm:technical_theorem_about_paths} to. 
\begin{thm}
\label{thm:theorem_1_in_the_paper} $ $
\begin{enumerate} % \itemsep=1.5ex
\item Suppose $\psi = \{ \psi_t \}_{ t \in S^1}$ is a loop of contactomorphisms with contact Hamiltonian $k_t$. Denote by $\sigma_t : \Sigma \to (0 , \infty)$ the conformal factor of $\psi$ (satisfying $\psi_t^* \alpha = \sigma_t \alpha$), and set 
\begin{equation}
\varrho := \min_{(x,t) \in \Sigma \times [0,1]} \sigma_t (x).
\end{equation} 
If
\begin{equation}
0< \| k \|_{\mathrm{osc}} < \varrho \cdot \wp(  \nu_0) 
\end{equation}
then there is a closed Reeb orbit belonging to the free homotopy class $\nu_{\psi}$ with period $\eta$ such that $\|k\|_-< \eta \leq \|k\|_+$.
As a consequence, if $\psi$ is a non-trivial non-negative loop with $\| k \|_{\mathrm{osc}} <\varrho\cdot  \wp(  \nu_0) $, then
\begin{equation}
  \wp(  \nu_{ \psi}) \le \| k \|_+.
\end{equation}
\item Suppose $\Sigma$ is Liouville fillable and $\psi = \{ \psi_t \}_{ t \in S^1}$ is a loop of contactomorphisms with contact Hamiltonian $k_t $. If 
\begin{equation} 0<\| k \|_{\mathrm{osc}} <  \wp(  \nu_0) 
\end{equation}
then there is a closed Reeb orbit belonging to the free homotopy class $\nu_{\psi}$ with period $\eta$ such that 
$\|k\|_-< \eta \leq \|k\|_+$.
As a consequence, if $\psi$ is a non-trivial non-negative loop with $\| k \|_{\mathrm{osc}} <  \wp(  \nu_0) $, then
\begin{equation}
  \wp(  \nu_{ \psi}) \le \| k \|_+.
\end{equation}
\end{enumerate}
\end{thm}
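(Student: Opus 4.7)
The strategy is to reduce to Theorem \ref{thm:technical_theorem_about_paths} by modifying $\psi$ into a genuine path $\varphi$, so that $\varphi_1 \ne \id$ and the translated points produced by the technical theorem are nontrivial. Direct application to the loop $\psi$ would allow the trivial critical values $\eta = 0$, since every point is a translated point of $\id$. To remedy this, we shift $\psi$ along the Reeb flow by the running minimum of $k$: set $T(t) := \min_{x \in \Sigma} k_t(x) > 0$ and $C(t) := -\int_0^t T(s)\,ds$, so $C(1) = \|k\|_-$, and define
\begin{equation*}
\varphi_t := \theta_{C(t)} \circ \psi_t.
\end{equation*}
Since the Reeb flow preserves $\alpha$, the conformal factor of $\varphi$ equals that of $\psi$ and $\varrho^\varphi = \varrho$. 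A direct computation of $\alpha(\dot\varphi_t)$ yields the contact Hamiltonian
\begin{equation*}
h_t(q) = k_t\big(\theta_{-C(t)}(q)\big) - T(t),
\end{equation*}
which is non-negative by the choice of $T$; and since $\theta_{-C(t)}$ is a diffeomorphism of $\Sigma$, one has $\|h\|_+ = \int_0^1 (\max_x k_t - \min_x k_t)\,dt = \|k\|_{\mathrm{osc}}$.

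The hypothesis $\|k\|_{\mathrm{osc}} < \varrho \cdot \wp(\nu_0)$ thus becomes $\|h\|_+ < \varrho \cdot \wp(\nu_0)$, and Theorem \ref{thm:technical_theorem_about_paths}(1) applied to $\varphi$ furnishes a critical point $(x, r, \eta)$ of $\mathcal{A}_\mathcal{H}$ with $0 \le \eta \le \|k\|_{\mathrm{osc}}$. By Lemma \ref{lem:critial_points_of_functionals}(2), the point $p := x(1/2)$ satisfies $\theta_{-\eta}(p) = \varphi_1^{-1}(p) = \theta_{-\|k\|_-}(p)$, so that $\theta_{\|k\|_- - \eta}(p) = p$. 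Hence $p$ lies on a closed Reeb orbit of period $\eta - \|k\|_- = \eta + (-\|k\|_-) \in [-\|k\|_-, \|k\|_+]$, using $\|k\|_{\mathrm{osc}} + (-\|k\|_-) = \|k\|_+$; the lower endpoint is strictly positive because $k > 0$.

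The remaining and principal obstacle is to identify the free homotopy class of this Reeb orbit as $\nu_\psi$. Projecting $x$ to $\Sigma$ gives on $[0, 1/2]$ a Reeb arc $A$ from $q := \theta_{-\|k\|_-}(p)$ to $p$, and on $[1/2, 1]$ the $\varphi^{-1}$-arc $B(s) := \varphi_s^{-1}(p) = \psi_s^{-1}\big(\theta_{-C(s)}(p)\big)$ from $p$ to $q$. The deformation $B_u(s) := \psi_s^{-1}\big(\theta_{-u C(s)}(p)\big)$, $u \in [0, 1]$, fills a square whose boundary consists of $B_1 = B$, the $\psi^{-1}$-loop $B_0$ based at $p$, the Reeb path $\gamma(u) := \theta_{-u\|k\|_-}(p)$ from $p$ to $q$, and the constant at $p$; this exhibits $B \simeq B_0 * \gamma$, so that $x \simeq A * B_0 * \gamma$ as a based loop at $q$. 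Since $A * \gamma$ forms a full Reeb revolution at $q$ of period $\eta - \|k\|_-$, and $[x] = 0$ in $\pi_1(\Sigma, q)$, one deduces $[A * \gamma] = [A * B_0 * A^{-1}]^{-1}$, whose free homotopy class is the inverse of that of $B_0$. Since the evaluation map $\Cont_0(\Sigma, \xi) \to \Sigma$ is a $\pi_1$-homomorphism sending $[\psi^{-1}]$ to $\nu_\psi^{-1}$, the free class of $B_0^{-1}$ is $\nu_\psi$, as required. Part (2) of the theorem follows by the identical construction, invoking part (2) of Theorem \ref{thm:technical_theorem_about_paths} and working on $\Lambda(\widehat W)$.
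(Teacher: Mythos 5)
Your proof is correct and follows essentially the same route as the paper's: you shift the loop $\psi$ by the backward Reeb flow of the running minimum of $k$ to obtain a nonnegative path $\varphi$ with $\|h\|_+=\|k\|_{\mathrm{osc}}$ and unchanged conformal minimum, apply Theorem \ref{thm:technical_theorem_about_paths}, read off the translated point as a closed Reeb orbit of period $\eta-\|k\|_-$, and identify its free homotopy class from the contractibility of the critical loop. Your square-lemma deformation of $B$ into $B_0 * \gamma$ is simply a more explicit rendering of the paper's figure-eight homotopy.
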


\begin{proof}
Again we prove the statement in case (1) and explain changes needed for case (2) along the way. Let us start with any
contact Hamiltonian $k_t:\Sigma\rightarrow \R$ and set 
$ \varpi_t:=\min_\Sigma k_t$ and $\varpi:=\int_0^1\varpi_s \,ds$. Then 
$\varpi=\|k\|_-$ and
\begin{equation}
k_t(x) \geq \varpi_t, \qquad \forall \,x \in \Sigma, \, t \in [0,1].
\end{equation}
Set $h_t (x):= k_t(x) - \varpi_t$ and observe that by construction $h_t\geq 0$, $\|h\|_-=0$ and $\|h\|_{\mathrm{osc}} =\|k\|_{\mathrm{osc}} $.
Actually $h_t$ is the contact Hamiltonian for the non-negative path 
\begin{equation}\label{eq:compos}
  \varphi_t := \theta_{ - \int_0^t\varpi_s \,ds} \circ \psi_t.
\end{equation}
 Moreover, if $\sigma_t:\Sigma\rightarrow (0,\infty)$ denotes the conformal factor of $\psi_t$
the conformal factor $\rho_t$ of $\varphi_t$ is given by 
\begin{equation}
  \rho_t(x) = \sigma_t \Big(\theta_{ - \int_0^t\varpi_s \,ds}(x) \Big),
\end{equation}
since the conformal factor of $\theta_t$ is identically 1. In particular,
\begin{equation}
\inf_{(x,t) \in \Sigma \times [0,1]} \sigma_t(x) = \inf_{(x,t) \in \Sigma \times [0,1]} \rho_t(x) \;.
\end{equation}
Consider first the special situation in which $\|k\|_{\mathrm{osc}}=0$: then $h_t=0$ and thus (\ref{eq:compos}) implies $\psi_t=\theta_{\int_0^t\varpi_s ds}$.
If $\psi$ is non-trivial non-negative loop of contactomorphisms, then the Reeb flow $\theta_t$ is periodic with period $\varpi=\|k\|_+>0$ and we obtain   $\wp(  \nu_{ \psi}) \le \| k \|_+$ in this situation.\\
From now on we will assume throughout that $\psi$ is a loop of contactomorphisms generated by a contact Hamiltonian $k_t$ with $0<\|k\|_{\mathrm{osc}} <\varrho\cdot  \wp(  \nu_0)$.\\
Then $h_t$, as constructed above, is non-negative, not identically $0$ and $\|h\|_+=\|h\|_{\mathrm{osc}} <\varrho \cdot \wp( \nu_0)$. In case (2) we similarly obtain $h_t$ with $\|h\|_+=\|h\|_{\mathrm{osc}} < \wp( \nu_0)$. Therefore in both cases
Theorem \ref{thm:technical_theorem_about_paths} is applicable to the path $\varphi_t$, and we deduce the existence of a critical point $(x,r, \eta)$ of the  functional $\mathcal{A}_{ \mathcal{H}}$ (associated to $h_t$) with 
\begin{equation}
\label{eq:eta_eq}
0 < \eta \le \| h\|_+.
\end{equation}
Set $p:= x(\tfrac12)$ and $q := x(0)$, which are both points in $\Sigma$. 
Since $\psi$ is a loop of contactomorphisms based at the identity, (\ref{eq:compos}) implies that $\varphi_1^{-1}=\theta_\varpi$. Combining this with part (2) of Lemma \ref{lem:critial_points_of_functionals} we have that
 \begin{equation}
  \theta_{ - \eta}(p) = \varphi_1^{-1}(p) = \theta_{\varpi}(p),
\end{equation}
and hence
\begin{equation}
  \theta_{\varpi + \eta }(p) = p.
\end{equation}
In other words, $p$ lies on a closed Reeb orbit $\gamma$ of period $\varpi+ \eta$, with $0<\eta<\|h\|_+$. \\
Since $\|k\|_-=\varpi$ and $\|h\|_+=\|k\|_{\mathrm{osc}} =\|k\|_+-\|k\|_-$, the period of the Reeb orbit $\gamma$ lies in the interval $(\|k\|_-,\|k\|_+]$. If $k_t$ is non-negative then $\wp( \nu_{\psi})\leq \|k\|_+$, since in this situation $\|k\|_-\geq 0$.\\
We claim that $\gamma$ belongs to the free homotopy class $\nu_{\psi}$. Up to time reparametrisation, the critical point $x : S^1 \to \Sigma$ satisfies
\begin{equation}
   x(t) = \begin{cases}
   \theta_{ 2 \eta t}(q), & t \in [0,\tfrac12], \\
   \psi_{2t-1}^{-1} \circ \theta_{ \int_0^{2t-1} \varpi_{s}ds}(p), & t \in [\tfrac12 ,1],
   \end{cases}
 \end{equation} 
 that is, $x(t)$ first follows the Reeb flow up to time $\eta$, and then flows along $\varphi_t^{-1} = \psi_t^{-1} \circ \theta_{ \int_0^t\varpi_s ds }$. See the first loop in Figure \ref{pic:figure8}. Since the composition $\psi_t^{-1} \circ \theta_{ \int_0^t\varpi_s ds}$ is homotopic to the concatenation of $\psi_t^{-1} $ and $ \theta_{ \int_0^t\varpi_s ds}$, the loop $x$ is freely homotopic to the figure-8 loop $y(t)$ defined by
 \begin{equation}
   y(t) = \begin{cases}
   \theta_{ (\eta + \varpi) 2t}(q), & t \in [0,\tfrac12], \\
   \psi_{2t-1}^{-1}( \theta_{ \eta + \varpi}(q)), & t \in [\tfrac12 ,1].
   \end{cases}
 \end{equation} 
 See the second and third loops in Figure \ref{pic:figure8}.
\begin{figure}[ht] 
\def\svgwidth{70ex} 
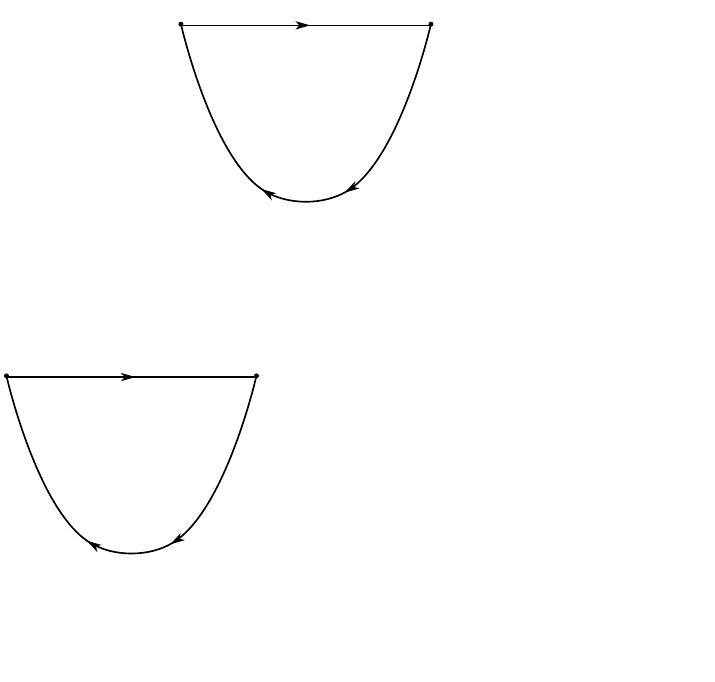
\caption{The loops $x(t)$ and $y(t)$.}\label{pic:figure8}
\end{figure}

Since the Reeb orbit $\gamma$ is the first half of $y$, and the loop $y$ itself is contractible, it follows that $\gamma$ belongs to $\nu_{\psi}$, as the second half of $y$ belongs to $\nu_{\psi^{-1}}$.
\end{proof}

\section{SFT compactness}
\label{sec:SFTcompactness}
In this section we prove the $C^{\infty}_{\mathrm{loc}}$-compactness result, stated as Theorem \ref{thm:compactness}. If $\Sigma$ is Liouville fillable then Theorem \ref{thm:compactness} has already been proved in \cite[Theorem 2.9]{AlbersFrauenfelder2010c}. The proof for the symplectisation case uses ideas from SFT compactness.

\begin{defn}
\label{defn:HoferEnergy} Let $(Z,j)$ denote a compact Riemann surface
(possibly disconnected and with boundary). 
 Fix an almost complex structure $J$ on $S \Sigma$ which is $-\omega$ compatible and of SFT-type (cf. Definition \ref{defn:SFT_type}). Suppose $u:Z\rightarrow S \Sigma $ is a $(j,J)$-holomorphic
map.  Write as usual $u=(x,r)$ so that $x : Z \to \Sigma$ and  $r:Z \to \R^+$.
Define $\tilde{u} : Z \to \Sigma \times \R$ by $\tilde{u}(z) = (x(z), \log r(z))$. We define the \textbf{Hofer energy} $\mathbf{E}(u)$ of $u$ as 
\begin{equation}
\mathbf{E}(u)=\sup_{\sigma\in\mathcal{S}}\int_{Z}\tilde{u}^*d(\sigma\alpha) \in [0,+\infty],
\end{equation}
where $\mathcal{S}:=\left\{ \sigma \in C^{\infty}(\R,[0,1])\mid\sigma'\geq0\right\} $.
\end{defn}

The following result appears explicitly in \cite[Theorem 5.3]{AlbersFuchsMerry2015}, but it is a minor variation on similar results in \cite{BourgeoisEliashbergHoferWysockiZehnder2003,CieliebakMohnke2005,Fish2011}. The only novelty is that no non-degeneracy assumptions on $\alpha$ are made. 
\begin{thm}
\label{thm:SFT} Let $J$ denote an almost complex structure on $S \Sigma$ which is compatible with $- \omega$ and of SFT-type.
Suppose $(Z_k,j_k)_{k \in \N}$ is a family of compact (possibly disconnected) Riemann surfaces with boundary and uniformly bounded genus. Assume that 
\begin{equation}
u_k=(x_k,r_k):Z_k\to S \Sigma
\end{equation}
is a sequence of $(j_k,J)$-holomorphic maps which have uniformly bounded Hofer energy $\mathbf{E}(u_k)\leq E$, are nonconstant on each connected
component of $Z_k$, and satisfy $r_k(\partial Z_k)\subset [e^a,+\infty)$ for some $a \in \R$. Asume that 
\begin{equation}
\inf_{k \in \N} \inf_{z \in Z_k} r_k(z)= 0.
\end{equation}
Then after passing to a subsequence, there exist cylinders $C_k \subset Z_k$, biholomorphically equivalent to
standard cylinders $[-L_k,L_k]\times S^1$, such that
$L_k\to + \infty$ and such that $u_k|_{C_k}$  converges (up to an $\R$-shift) in $C^\infty_{\emph{loc}}(\R\times S^1,S \Sigma)$ to a map  $u:\R\times S^1\to S \Sigma$  of the form $u(s,t)=(\gamma( \pm tT), e^{\pm Ts})$, where $\gamma$ is a closed orbit of $R$ of period $0 < T \le E$.
\end{thm}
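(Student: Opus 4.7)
The approach follows the standard symplectic field theory compactness framework of \cite{BourgeoisEliashbergHoferWysockiZehnder2003} and \cite{Fish2011}. The novelty here is only that $\alpha$ need not be non-degenerate, so the classical Hofer--Wysocki--Zehnder asymptotic analysis must be replaced by its subsequential analogue.

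First I would locate a sequence of neck points. Since $r_k(\partial Z_k) \subset [e^a, \infty)$ while $\inf_k \inf_{Z_k} r_k = 0$, for $k$ large there exist interior points $z_k \in Z_k$ with $\rho_k := r_k(z_k) \to 0$; after passing to a subsequence I may arrange that $z_k$ is a local minimum of $r_k$. The boundary condition $r_k(\partial Z_k) \geq e^a$ then forces $z_k$ to sit at the centre of an annulus $A_k \subset Z_k$ on which $r_k$ grows monotonically from near $\rho_k$ up to at least $e^a$. These annuli will be the cylinders $C_k$ of the conclusion.

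Second, I would rescale the target. Because $J$ is SFT-type, i.e.\ invariant under the Liouville flow $(x,r) \mapsto (x, \lambda r)$, the maps $\tilde u_k := (x_k, r_k/\rho_k)$ are still $(j_k, J)$-holomorphic with Hofer energy $\leq E$, and now satisfy $\tilde r_k(z_k) = 1$ while $\tilde r_k|_{\partial A_k} \geq e^a/\rho_k \to \infty$. Standard $\epsilon$-regularity together with the bounded Hofer energy bounds $|d\tilde u_k|$ away from finitely many bubble points in the domain; bubbles would be finite energy planes or spheres in $S\Sigma$, and spheres are excluded by the exactness of $\omega = d(r\alpha)$. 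On the complement of the bubbling points, the Monotonicity Lemma combined with the Hofer energy bound (as in \cite{Fish2011}) shows that the subannulus $C_k \subset A_k$ on which $\tilde r_k$ traverses the unbounded range of scales has conformal modulus $L_k \to \infty$, and that the restriction $\tilde u_k|_{C_k}$ converges in $C^\infty_{\mathrm{loc}}(\R \times S^1, S\Sigma)$, after an $\R$-shift, to a non-constant $J$-holomorphic cylinder $u : \R \times S^1 \to S\Sigma$ with $d\alpha$-energy $T \in (0, E]$.

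Finally, I would identify $u$ with a trivial cylinder. Translation invariance of $J$ in $\log r$ and exactness of $d(r\alpha)$ make $s \mapsto \int_{\{s\} \times S^1} u^* d\alpha$ a non-increasing non-negative function, while the choice of $C_k$ forces it to be constant equal to $T$, and hence $u$ is in fact $\R$-invariant modulo the natural translation. This reduces $u$ to a map of the form $(\gamma(\pm tT), e^{\pm Ts})$ where $\gamma$ is a closed Reeb orbit of period $T$. The main obstacle is precisely this last step, since without non-degeneracy of $\alpha$ one does not have exponential asymptotic convergence to an isolated orbit; I would instead argue as in \cite[Theorem 5.3]{AlbersFuchsMerry2015}, choosing $C_k$ adaptively so that the oscillation of $\log \tilde r_k$ on $C_k$ matches that of the limit, which suffices to conclude the stated trivial form of $u$ with period $T \leq E$.
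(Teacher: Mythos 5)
The paper does not actually prove this theorem: it is quoted verbatim from Theorem 5.3 of the authors' earlier paper (Albers--Fuchs--Merry 2015) and attributed to the SFT-compactness circle of ideas of Bourgeois--Eliashberg--Hofer--Wysocki--Zehnder and Fish. So your proposal is a reconstruction of the content of that citation, and your overall strategy --- rescale by the Liouville flow using the SFT-invariance of $J$, control bubbling via exactness and the energy bound, extract a long neck, and identify the limit through the characterisation of finite-energy cylinders with vanishing $d\alpha$-energy --- is the standard and intended route.

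Two steps as written are genuinely gapped, however. First, you never invoke the uniform genus bound, yet it is essential: $r_k$ restricted to the curve has no monotonicity in any domain direction, its level sets can be disconnected, and the preimage $u_k^{-1}\big(\Sigma\times[\rho_k,e^{a}]\big)$ need not contain an annulus ``on which $r_k$ grows monotonically'' centred at $z_k$. The correct argument slices the symplectisation into many slabs and uses the genus bound to conclude that all but boundedly many slab-preimage components are annuli; that is where the neck comes from. Second, and more seriously, your choice of $C_k$ (``the subannulus on which $\tilde r_k$ traverses the unbounded range of scales'') does not force the $C^\infty_{\mathrm{loc}}$-limit to be a trivial cylinder: the neck may carry up to $E$ worth of contact area $\int u_k^*d\alpha$, and if the $\R$-shift is centred where that area concentrates, the limit is a genuinely non-trivial finite-energy cylinder rather than $(\gamma(\pm tT),e^{\pm Ts})$. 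Indeed your intermediate claim that the limit has ``$d\alpha$-energy $T\in(0,E]$'' contradicts your conclusion, since a trivial cylinder has $d\alpha$-energy zero; $T$ is the period $\int_{\{s\}\times S^1}u^*\alpha$, not the $d\alpha$-energy. The missing idea is a pigeonhole selection: because $\int_{C_k}u_k^*d\alpha\le E$ while the conformal length of the neck diverges, one can choose windows of width $2L_k\to\infty$ whose contact area tends to $0$; only then does the limit have vanishing $d\alpha$-energy, whence Hofer's characterisation (which holds without non-degeneracy of $\alpha$ and yields exactly the subsequential convergence asserted) identifies it as a trivial cylinder over a closed Reeb orbit of period $0<T\le E$.
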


\begin{proof}[Proof of Theorem \ref{thm:compactness}]
Case (2) is well-known, see for instance \cite[Theorem 2.9]{AlbersFrauenfelder2010c}. 

We will show that there exists $l>0$ such that the following holds. All elements $(u, \eta)$ with $(\sigma, u ,\eta) \in \mathcal{M}$ for some $\sigma \in \R^+$ satisfy that $u$ has image disjoint from $\Sigma \times (0, e^{-l})$. Then case (1) reduces to the argument in \cite[Theorem 2.9]{AlbersFrauenfelder2010c}. The existence of such $l$ was the main point addressed in \cite{AlbersFuchsMerry2015}, which we now explain. 

Assume for contradiction there exists a sequence of gradient flow lines $(\sigma_k, u_k, \eta_k) \in \mathcal{M}$  such that, writing $u_k = (x_k, r_k)$, one has  $\lim_{k \to +\infty} \inf r_k=0$. Recall  the definition of $\varrho$ from \eqref{eq:inf_rho}. Then, for a number $ 2 \delta  - \log \varrho < \varepsilon  \le 3  \delta - \log \varrho$ such that $e^{- \varepsilon}$ is a regular value
of all $r_k$'s, we set $Z_k := u_k^{-1}(\Sigma \times (0, e^{-\varepsilon}))$ and consider the map  $v_k : Z_k \to S \Sigma$ obtained by restricting $u_k$.  By \eqref{eq:supp}, the curves $v_k$ are $J$-holomorphic.  Since the gradient flow lines are asymptotic to critical points contained in $\Sigma \times \{1 \}$, each $Z_k$ is a compact
Riemann surface of genus $0$. 

Set $\tilde{v}_k = (x_k, \log r_k)|_{Z_k}$ as in Definition \ref{defn:HoferEnergy}. We have
\begin{equation}
\E(u_k, \eta_k) \geq \int_{Z_k}\tilde{v}_k^*d(e^s\alpha).
\end{equation}
On the other hand, by Stokes' Theorem we have for any $\sigma \in\mathcal{S}$ that
\begin{equation}
\int_{Z_k}\tilde{v}_k^*d(\sigma\alpha)\leq\int_{Z_k}\tilde{v}_k^*d\alpha=e^{\varepsilon}\int_{Z_k}\tilde{v}_k^*d(e^s\alpha),
\end{equation}
and thus  
\begin{equation}
\label{eq:hofer_energy_conversion}
  \mathbf{E}(v_k) \le e^{\varepsilon}\, \E(u_k, \eta_k).
  \end{equation}
Moreover the $J$-holomorphic curves $\tilde{v}_k$ have no constant components, and their Hofer energy is uniformly
bounded by 
\begin{equation}
e^{ \varepsilon} \E(u_k, \eta_k) \le e^{\varepsilon} \| \mathcal{H} \|_{\mathrm{osc}} ,
\end{equation} 
where we used \eqref{eq:hofer_energy_conversion} and part (1) of Lemma \ref{lem:energy_of_things_in_M}. Next, Lemma \ref{lem:Kosc_and_h} and the hypothesis of the theorem implies that
\begin{equation}
  \| \mathcal{H} \|_{\mathrm{osc}} \le e^{2 \delta} \| h \|_+<  \varrho \cdot e^{-3\delta}\wp( \nu_0),
\end{equation}
and hence we deduce that 
\begin{equation}
  \mathbf{E}(v_k) < e^{\varepsilon}\cdot \varrho \cdot e^{-3\delta} \wp(\nu_0) \le  \varepsilon^{  3 \delta - \log \varrho} \varrho \cdot e^{-3\delta}  \wp(\nu_0)=\wp( \nu_0).
\end{equation}
Thus by applying Theorem \ref{thm:SFT} to the pseudoholomorphic
curves $v_k$ it follows that there
exists a map $u_{k_0}$ (in fact a whole subsequence of the $u_k$ of such maps) with the following property: there is an embedded circle $S$ in the
domain $\R \times S^{1}$  of $u_{k_0}= (x_{k_0},r_{k_0})$,
such that the restriction of $x_{k_0}$ to $S$ parametrizes a circle
in $\Sigma$ homotopic to a Reeb orbit $\gamma$ of $\alpha$ which is of  period strictly less than
$\wp (\nu_0)$.
Since the domain of $u_{k_0}$ is $\R\times S^1$,
this circle $S$ bounds a disk $D$ in $\R \times S^1$,
or it is isotopic to a circle $\{s\}\times S^1\subset\R \times S^1$.
In either case $\gamma$ is contractible, since $x_{k_0}(S)$ is contractible. This is a contradiction to the definition of $\wp (\nu_0)$.
\end{proof}

\bibliographystyle{amsalpha}
\bibliography{willmacbibtex}
\end{document}